\newtheorem{Theorem}{Theorem}[section]
\newtheorem{Lemma}[Theorem]{Lemma}
\def \0{\lambda_{0}}
\newcommand{\R}{ \mathbb{R}}
\begin{document}
	\title[Solutions of Yamabe-Type  Equations on Projective Spaces  1]{Multiplicity of Solutions and Degenerate Solutions of Yamabe-Type  Equations on Projective Spaces  }

	\author{H\'ector Barrantes G.  }
	\address{Universidad de Costa Rica. Sede de Occidente. 20201. Alajuela. Costa
		Rica.}
	\email{ hector.barrantes@ucr.ac.cr}%
	
	\begin{abstract}We consider  Yamabe-type  equations on Projective Spaces
		$\mathbb{C} {\bf P}^n$  and $\mathbb{H} {\bf P}^n$ with the respectives canonical metrics,   and study the existence and multiplicity of solutions of Yamabe-type equation,  which are invariant by  cohomogeneity one actions of $ U(n)$ and $Sp(n)$ respectively.  We  also prove the existence of  degenerate solutions of the Yamabe-type  equationn on  $\mathbb{C} {\bf P}^n$  and $\mathbb{H} {\bf P}^n$,  which are invariant by  cohomogeneity one actions of $ U(n)$ and $Sp(n)$ respectively.
		
	\end{abstract}
	\maketitle

	\section{Introduction}
	
	Given a Riemannian manifold $(M^n , g)$ of dimension $n\geq 3$,  a
	{\it Yamabe-type } equation  is an elliptic equation of the form
	
	\begin{equation}\label{Yamabe-subcritical-1}
		-  \Delta_g u + \lambda u = \lambda u^{q-1}
	\end{equation}

	\noindent where  $\lambda >0$  and $2 < q < \frac{2n}{n-2}$. This equation has been studied by several authors in the particular case of  $\mathbb{S}^n$ with the round metric $g_0^n$. For instance  B. Gidas and J. Spruck proved in \cite{Gidas-Spruck}   that if $\lambda  \leq \frac{n-2}{4}$ the constant function $u = 1$ is the only solution.  In the article \cite{Bidaut-Veron} M.F. Bidaut and L. Veron showed  that  if $\lambda \leq \frac{n}{q-2}$  then the equation (\ref{Yamabe-subcritical-1}) has a unique solution (the constant solution).   They also showed that when  $\lambda$  is close $ \frac{n}{q-2}$, there are nonconstant solutions of (\ref{Yamabe-subcritical-1}).  In   \cite{YanYan}. Q. Jin, Y.Y. Li and H. Xu proved the existence and multiplicity of positive solutions, using bifurcation theory and the eigenvalues of the Laplacian on the sphere.  In \cite{Petean-Barrantes} H. Barrantes and J. Petean proved that, on $(\mathbb{S}^n \times \mathbb{S}^n ,   g_0^n + \delta g_0^n)$ with $\delta >0$, the equation (\ref{Yamabe-subcritical-1}) has positive solutions which depends non trivially on both factors.

	Motivation for studying Equation (\ref{Yamabe-subcritical-1}) also comes from  the study of the Yamabe equation .
	
	\begin{equation}\label{Eq-Yamabe}
		-a_n\Delta_g u + \mathbf{s}_gu  = \lambda u^{p-1},
	\end{equation}
	
	Where  $a_n =\frac{4(n-1)}{n-2}$, $p_n =\frac{2n}{n-2}$.  
	In case $u$ is a positive solution of this equation the
	conformal metric $u^{p_n -2} g$ has constant scalar curvature
	$\lambda$.  
	The Yamabe equation is the critical case of equation (\ref{Yamabe-subcritical-1}). Note that   when $q = p_n = \frac{2n}{n-2}$ and $\lambda =  \frac{\mathbf{s}_g}{a_n}$ the equation  (\ref{Eq-Yamabe}) is of the form of equation (\ref{Yamabe-subcritical-1} )
	
	We denote by $g_{FS}$ and  $g_{H}$ the canonical metrics on   $\mathbb{C} {\bf P}^n$  and $\mathbb{H} {\bf P}^n$.  Since $g_{FS}$ and  $g_{H}$  are   Einstein metrics, by   \cite{Obata}, in the critical cases 
	$ q= p_{2n} $ and $ q= p_{4n} $ respectively  and  $\lambda  = -\frac{S_{g_{FS}}}{a_{2n}}$, $\lambda  = -\frac{S_{g_{H}}}{a_{4n}}$,
	the only positive solution of the equation (\ref{Eq1-CPn})  is the constant solution $u\equiv 1$.

	In this article we  consider  the action   of the unitary group $U(n) $ on the complex projective space  $\left(\mathbb{C} {\bf P}^n, g_{FS}\right)$,  where  $g_{FS}$ is the   Fubini-Study metric. By restricting ourselves to  hypersurfaces and using local and global bifurcation theory techniques, we prove the existence and  multiplicity of positive solutions of the equation (\ref{Yamabe-subcritical-1}) on  $\mathbb{C} {\bf P}^n$.

	\begin{Theorem}\label{t1-CPn}
		For every positive integer $k$, let  $\lambda_k: = \lambda_{k, q} := \frac{4k(k+n)}{q-1} $ and   $q \in [2, p_{2n} )$.
		If    $\lambda \in \left(  \lambda_{k, q} , \;  \lambda_{k+1, q}\right]$,  then the equation
		
		\begin{equation}\label{Eq1-CPn}
			-\Delta_{g_{FS}} u + \lambda u  = \lambda u^{q-1}.
		\end{equation}
		
		\noindent  has at least $k$  positive solution  on     $(\mathbb{C} {\bf P}^n$,
		invariants by the action of  $U(n)$.
	\end{Theorem}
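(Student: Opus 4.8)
The plan is to reduce the PDE to an ODE by the standard cohomogeneity-one reduction, and then apply global bifurcation theory with the constant solution $u\equiv 1$ as the trivial branch. The $U(n)$-action on $(\mathbb{C}{\bf P}^n,g_{FS})$ is of cohomogeneity one, with orbit space an interval $[0,L]$ parametrized by a geodesic variable $t$; the principal orbits are geodesic spheres (distance spheres about a point), and the two singular orbits are a point and a copy of $\mathbb{C}{\bf P}^{n-1}$. A $U(n)$-invariant function $u$ is then a function $u(t)$ of the single variable $t$, and the Laplacian $\Delta_{g_{FS}}$ applied to such a function becomes an ordinary differential operator of the form $u'' + h(t)u'$, where $h(t)=(\log v(t))'$ and $v(t)$ is the volume density of the principal orbits. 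So (\ref{Eq1-CPn}) becomes the two-point Neumann-type boundary value problem
\begin{equation}\label{ode-CPn}
-u'' - h(t)u' + \lambda u = \lambda u^{q-1},\qquad u'(0)=u'(L)=0,\quad u>0.
\end{equation}
First I would carry out this reduction carefully, recording the explicit form of $h$ (equivalently of $v$) coming from the Fubini–Study metric, and checking that smoothness of $u$ across the two singular orbits is exactly equivalent to the Neumann conditions at the endpoints (this uses that the singular orbits have the right codimension so that even-order vanishing of $u'$ is automatic).

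Next I would set up the functional-analytic framework for bifurcation. Write $u = 1 + w$, so that $w$ solves a nonlinear equation $-w'' - h w' + \lambda(q-1)w = \lambda N(w)$ with $N(w)=o(|w|)$; rephrase this as $w = \lambda(q-1)(-\Delta_h + \text{Id})^{-1}w + \text{(compact higher-order terms)}$ on a suitable Hölder or Sobolev space of $U(n)$-invariant functions with Neumann conditions, where $\Delta_h$ is the reduced Laplacian. The linearized operator $-\Delta_h$ on invariant functions has discrete spectrum; the key point is that its eigenvalues are exactly $\mu_k = 4k(k+n)$, $k=0,1,2,\dots$ — these are precisely the eigenvalues of $\Delta_{g_{FS}}$ restricted to the $U(n)$-invariant functions (the invariant part of the spectrum of $\mathbb{C}{\bf P}^n$ with $g_{FS}$ normalized so that the full spectrum is $\{4k(k+n)\}$). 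Each such eigenvalue is simple on the invariant subspace. Then $\lambda$ is a bifurcation value for (\ref{ode-CPn}) precisely when $\lambda(q-1) = \mu_k$, i.e. $\lambda = \lambda_{k,q} = \frac{4k(k+n)}{q-1}$; since the relevant eigenvalues are simple, the Crandall–Rabinowitz theorem gives a local branch of nonconstant solutions bifurcating from $(\lambda_{k,q}, 1)$ for each $k\geq 1$, and Rabinowitz's global bifurcation theorem promotes each to an unbounded (in $\lambda\times C^0$) connected branch $\mathcal{C}_k$.

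The decisive step — and the main obstacle — is to show that the branch $\mathcal{C}_k$ cannot escape through the "ends" other than by increasing $\lambda$, and in particular that $\mathcal{C}_k$ meets every $\lambda$-slice with $\lambda > \lambda_{k,q}$, giving a genuine solution for each such $\lambda$; together with a nodal-counting argument (solutions on $\mathcal{C}_k$ have $w$ with exactly $k$ "humps"/sign changes of $u'$ along $[0,L]$, a Sturm-type property preserved along the branch) this separates the $k$ branches and yields at least $k$ solutions once $\lambda > \lambda_{k,q}$. Concretely I would need two a priori facts about (\ref{ode-CPn}): positivity, i.e. solutions on the branch stay bounded away from $0$ (so the branch does not run into the boundary of the positive cone — here one uses that $q< p_{2n}$, the subcritical Sobolev exponent, for compactness and $L^\infty$ bounds, mimicking the sphere case treated in the references), and an upper bound showing the branch is unbounded only in the $\lambda$-direction, not in $\|u\|$ at fixed $\lambda$. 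These a priori bounds, and the verification that the nodal structure is preserved (no secondary bifurcations collapse the count), are where the real work lies; once they are in place the conclusion "$\lambda\in(\lambda_{k,q},\lambda_{k+1,q}]$ implies at least $k$ solutions" follows by counting how many branches $\mathcal{C}_1,\dots,\mathcal{C}_k$ have already turned on, exactly as in \cite{YanYan} for the round sphere.
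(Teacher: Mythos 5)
Your proposal follows essentially the same route as the paper's proof: cohomogeneity-one reduction of the $U(n)$-invariant problem to an ODE on $[0,\pi/2]$ with Neumann conditions, bifurcation from the constant solution at the simple invariant eigenvalues via Crandall--Rabinowitz, Rabinowitz's global bifurcation theorem combined with a nodal (zero-counting) argument to keep the branches from returning to other bifurcation points, and a priori bounds (no nontrivial solutions for small $\lambda$ by Bidaut--V\'eron since $\mathrm{Ric}>0$, plus blow-up and Gidas--Spruck using $q<p_{2n}$) giving compactness of the solution set over compact $\lambda$-intervals, so that each noncompact branch must reach every $\lambda>\lambda_k$ and the branches $C_1,\dots,C_k$ yield $k$ solutions. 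One caveat: the linearization of $\lambda u^{q-1}$ at $u\equiv 1$ gives the bifurcation values $\lambda=\frac{4k(k+n)}{q-2}$, not $\lambda(q-1)=4k(k+n)$ as you wrote (you inherited the same $q-1$ slip that appears in the theorem's statement; the paper's own proof works throughout with $q-2$).
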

	
	Next, we consider  the action of the simplectic group $Sp(n)$ on the cuaternionic projective space 
	$ (\mathbb{H} {\bf P}^n$. This action are also of cohomogeneity one.  We will prove
	
	\begin{Theorem}\label{t2-HPn}
		For every positive integer $k$, let  $\mu_k := \mu_{k, q} : = \frac{4k(k+2n+1)}{q-1} $ and   $q \in [2, p_{4n} )$.
		If    $\lambda \in \left(  \mu_{k, q} , \;  \mu_{k+1, q}\right]$,  then the equation
		
		\begin{equation}\label{Eq1-HPn}
			-\Delta_{g_{H}} u + \lambda u  = \lambda u^{q-1}.
		\end{equation}
		\noindent  has at least $k$  positive solution  on     $(\mathbb{C} {\bf H}^n $
		invariants by the action of  $Sp(n)$.
	\end{Theorem}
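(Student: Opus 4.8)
The plan is to reduce (\ref{Eq1-HPn}) to a singular two-point boundary value problem on the orbit interval of the $Sp(n)$-action and then apply local and global bifurcation theory from the constant solution $u\equiv 1$; this is the same strategy one uses for Theorem \ref{t1-CPn}, with $U(n)$ and the eigenvalues $4k(k+n)$ of $(\C{\bf P}^{n},g_{FS})$ replaced by $Sp(n)$ and the eigenvalues $4k(k+2n+1)$ of $(\H{\bf P}^{n},g_{H})$. First I would record the radial reduction: the cohomogeneity one action of $Sp(n)\subset Sp(n+1)$ on $\H{\bf P}^{n}$ has principal orbit $S^{4n-1}$, exactly two singular orbits, namely the fixed point $[1:0:\dots:0]$ and the totally geodesic $\H{\bf P}^{n-1}=\{[0:z_{1}:\dots:z_{n}]\}$, and orbit space a compact interval $[0,L]$; thus an $Sp(n)$-invariant function on $\H{\bf P}^{n}$ is a function $u=u(t)$ of the orbit parameter $t\in[0,L]$ and $\Delta_{g_{H}}u=u''+\theta(t)\,u'$, where $\theta=(\log\omega)'$ and $\omega(t)$ is the volume of the orbit over $t$; $\theta$ has a simple pole at each endpoint, encoding the smoothness of invariant functions along the singular orbits. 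Hence the $Sp(n)$-invariant positive solutions of (\ref{Eq1-HPn}) are precisely the positive solutions of
\[
u''+\theta(t)\,u'-\lambda u+\lambda u^{q-1}=0\ \text{ on }(0,L),\qquad u'(0)=u'(L)=0.
\]

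Next I would study the branch $u\equiv 1$, a solution for every $\lambda>0$, and locate its degeneracies. Linearizing at $u\equiv 1$ reduces matters to the Sturm--Liouville operator $\varphi\mapsto-(\varphi''+\theta\varphi')$ on $(0,L)$ with Neumann conditions; its spectrum is discrete and simple, and since the action is of cohomogeneity one it is exactly the part of the spectrum of $-\Delta_{g_{H}}$ on $\H{\bf P}^{n}$ realized by $Sp(n)$-invariant eigenfunctions, that is, the numbers $4k(k+2n+1)$, $k\ge 0$, with one-dimensional invariant eigenspaces and with the $k$-th invariant eigenfunction changing sign exactly $k$ times in $(0,L)$. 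From this one reads off that $u\equiv 1$ is degenerate precisely at $\lambda=\mu_{k,q}$, $k\ge 1$, the geometric input being the above spectrum together with the derivative of $u\mapsto\lambda u^{q-1}$ at $u=1$, and each $\mu_{k,q}$ is a simple characteristic value; by the Crandall--Rabinowitz theorem a local branch of nonconstant solutions issues from $(\mu_{k,q},1)$, along which $u>0$ and $u-1$ has exactly $k$ zeros in $(0,L)$.

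Then I would globalize. By the Rabinowitz alternative, the connected component $\mathcal{C}_{k}$ of the closure of the set of nonconstant positive solutions that contains $(\mu_{k,q},1)$ is either unbounded or meets the trivial branch again at some $(\mu_{j,q},1)$ with $j\neq k$. A standard ODE-uniqueness argument shows that for $u\not\equiv 1$ every zero of $u-1$ in $(0,L)$ is simple and that none escapes to an endpoint, so the number of sign changes of $u-1$ is constant, equal to $k$, along $\mathcal{C}_{k}$ off the trivial branch; this rules out the second alternative and forces $\mathcal{C}_{k}$ to be unbounded. The subcritical range $q<p_{4n}$ now enters decisively: a Gidas--Spruck-type blow-up analysis yields a uniform $L^{\infty}$ bound on positive solutions of (\ref{Eq1-HPn}) for $\lambda$ in compact subsets of $(0,\infty)$, and the non-existence of nonconstant positive solutions for small $\lambda$ (by the classical estimates recalled in the Introduction, available here since $\H{\bf P}^{n}$ is Einstein with positive scalar curvature) keeps $\mathcal{C}_{k}$ from accumulating at $\lambda=0$; together these confine $\mathcal{C}_{k}$ to $\{\lambda\ge\mu_{k,q}\}$ and make it unbounded only in $\lambda$, so $\sup\{\lambda:(\lambda,u)\in\mathcal{C}_{k}\}=\infty$. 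Since $\mathcal{C}_{k}$ is connected and contains $(\mu_{k,q},1)$, it meets the slice $\{\lambda=\lambda_{0}\}$ in a nonconstant solution for every $\lambda_{0}>\mu_{k,q}$. Finally, for $\lambda_{0}\in(\mu_{k,q},\mu_{k+1,q}]$ the branches $\mathcal{C}_{1},\dots,\mathcal{C}_{k}$ all reach $\lambda_{0}$ and yield positive solutions $u_{1},\dots,u_{k}$ with $u_{i}-1$ changing sign exactly $i$ times; lying in distinct nodal classes they are pairwise distinct, which gives the asserted $k$ $Sp(n)$-invariant positive solutions of (\ref{Eq1-HPn}).

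The hard part is the global step: proving that each $\mathcal{C}_{k}$ is unbounded \emph{in the parameter $\lambda$}, and indeed sweeps out all of $(\mu_{k,q},\infty)$, rather than blowing up in the $C^{0}$-norm or collapsing toward $\lambda=0$. This rests on the uniform a priori estimate for positive solutions of (\ref{Eq1-HPn}), which is exactly where the hypothesis $q<p_{4n}$ is used, together with the small-$\lambda$ non-existence statement; with these in hand, the nodal bookkeeping, the check that $\theta$ has the expected regular-singular behaviour at the two endpoints, and the local bifurcation analysis are carried out just as for Theorem \ref{t1-CPn}.
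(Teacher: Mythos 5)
Your proposal is correct and follows essentially the same route as the paper: reduce to the singular Neumann ODE on the orbit interval of the cohomogeneity one $Sp(n)$-action, bifurcate from $u\equiv 1$ at the simple invariant eigenvalues $4k(k+2n+1)/(q-2)$ via Crandall--Rabinowitz, use the nodal count of the invariant eigenfunctions (the paper's Lemmas \ref{lemma-polynomial-HPn} and \ref{ceros-vk-HPn}) to separate the global Rabinowitz branches, and combine the subcritical Gidas--Spruck a priori bound with the Bidaut--V\'eron small-$\lambda$ nonexistence to force each branch to cover all of $(\mu_{k,q},\infty)$. The paper itself only sketches this, deferring to the $\mathbb{C}{\bf P}^n$ case, so your write-up matches its intended argument.
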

	
	We  also prove the existence of degenerate solutions of equation (\ref{Yamabe-subcritical-1}) on 
	$\mathbb{C} {\bf P}^n$  and $\mathbb{H} {\bf P}^n$ which appear at the first bifurcation point, invariants by the action on  $U(n)$ and $Sp(n)$ respectively. 
	We call a solution $(u_0, \lambda_0)$ of Equation  (\ref{Yamabe-subcritical-1})   \textit{degenerate} if the linearized equation at $(u_0, \lambda_0)$ 
	
	$$
	-  \Delta_g v  + \lambda_0 v  = \lambda_0 (q-1)u_0^{q-2}v
	$$
	has a nontrivial solution. Studying the existence of degenerate solutions is  important to obtain information about the behavior of the branches of nonconstant solutions as well as the  exact number of solutions of Equation (\ref{Yamabe-subcritical-1}) using bifurcation techniques. We will prove:
	
	\begin{Theorem}\label{degen-CPn}
		There exists $\lambda > 0$ such that equation 
		$$-\Delta_{g_{FS}} u + \lambda u  = \lambda u^{q-1}.$$
		\noindent with  $q \in [2, p_{2n} )$,  has a nontrivial degenerate solution on   $(\mathbb{C} {\bf P}^n, g_{FS})$,
		invariant by the action  of $U(n)$.
	\end{Theorem}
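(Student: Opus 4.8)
The plan is to exploit the cohomogeneity one structure: after restricting Equation~(\ref{Eq1-CPn}) to $U(n)$-invariant functions, solutions correspond to solutions of an ODE in a single variable $t$ parametrizing the orbit space of the $U(n)$-action on $\mathbb{C}{\bf P}^n$ (an interval $[0,L]$), with appropriate Neumann-type boundary conditions coming from the two singular orbits. Concretely, I would write the invariant Laplacian as $\Delta_{g_{FS}} u = u'' + h(t) u'$ where $h = (\log J)'$ for the volume density $J(t)$ of the orbits, so that the equation becomes $-u'' - h(t)u' + \lambda u = \lambda u^{q-1}$ on $[0,L]$ with $u'(0)=u'(L)=0$. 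The constant $u\equiv 1$ solves this for every $\lambda$, and its linearization is the weighted eigenvalue problem $-v'' - h(t)v' = \lambda(q-2)v$ with Neumann conditions; degeneracy at $(1,\lambda_0)$ happens exactly when $\lambda_0(q-2)$ equals one of the eigenvalues $\nu_j$ of this Sturm–Liouville operator, i.e. when $\lambda_0 = \nu_j/(q-2)$.

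Next I would track where these degeneracy values sit relative to the bifurcation values $\lambda_k = \frac{4k(k+n)}{q-1}$ appearing in Theorem~\ref{t1-CPn}. The numbers $\lambda_k(q-1) = 4k(k+n)$ are (up to the normalization of $g_{FS}$) the eigenvalues of $\Delta_{g_{FS}}$ on $U(n)$-invariant functions, so the $\nu_j$ above are precisely $4j(j+n)$. Thus the constant solution is degenerate exactly at $\lambda = \frac{4j(j+n)}{q-2}$ for each $j\geq 1$. But Theorem~\ref{degen-CPn} asks for a \emph{nontrivial} degenerate solution, not a degenerate constant. So the real content is: along the global bifurcation branch emanating from $(1,\lambda_1)$ guaranteed by the global bifurcation theorem (Rabinowitz-type, which must already be invoked to prove Theorem~\ref{t1-CPn}), there is a point $(u_0,\lambda_0)$ with $u_0$ nonconstant at which the linearized operator $L_{u_0,\lambda_0} v = -v'' - h(t)v' + \lambda_0 v - \lambda_0(q-1)u_0^{q-2}v$ has nontrivial kernel (with Neumann conditions).

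I would obtain this by a degree/parity argument. The standard picture: the global branch $\mathcal{C}$ bifurcating from the first bifurcation value must either be unbounded, return to another bifurcation point $(1,\lambda_j)$, or meet the set of degenerate solutions. One shows the branch of $U(n)$-invariant solutions stays in a region where nonconstant solutions cannot return to a constant bifurcation point without first becoming degenerate — e.g. by a Morse-index / nodal-count monotonicity argument for the one-dimensional problem, counting sign changes of $u_0 - 1$ or of the kernel elements, which is rigid in the ODE setting. Alternatively, and more robustly, I would use that the Leray–Schauder index of the constant solution changes as $\lambda$ crosses $\lambda_1$ and then argue that boundedness of the sub-branch in a suitable interval $[\lambda_1,\lambda_2]$, together with a priori $L^\infty$ bounds for subcritical $q$ (here the subcriticality $q < p_{2n}$ is essential, and $U(n)$-invariance plus the one-dimensional reduction makes the bounds elementary via the ODE), forces the branch to contain a turning point, which is exactly a degenerate nonconstant solution. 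The main obstacle I anticipate is the last step: ruling out that the branch escapes to infinity or loops back to a constant solution without ever passing through a degenerate nonconstant solution. In the ODE reduction this should be controllable — a priori bounds give a compact branch, and a careful analysis of the zeros of solutions of the linearized Sturm–Liouville problem (Sturm oscillation theory) pins down exactly when kernels appear — but making the implication "compact branch from the first eigenvalue $\Rightarrow$ degenerate nonconstant solution" airtight, rather than heuristic, is where the work lies, and it may require either an explicit energy/Pohozaev-type identity on the interval $[0,L]$ or a direct index computation at the endpoints of the relevant $\lambda$-interval.
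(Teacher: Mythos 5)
Your setup (the cohomogeneity one ODE reduction, the Neumann eigenvalue problem at the constant solution, and the identification of the bifurcation values) agrees with the paper, and you correctly isolate that the content is a \emph{nonconstant} degenerate solution. But the step you flag as the main obstacle is a genuine gap, and it is exactly where the paper uses a different, concrete ingredient. The Rabinowitz trichotomy cannot close it: in the proof of Theorem \ref{t1-CPn} the branch $C_1$ is shown to be noncompact and to project onto every $\lambda>\lambda_1$, so ``unbounded, or returns to another bifurcation point, or contains a degenerate solution'' is perfectly consistent with a branch that is simply unbounded in $\lambda$ and never turns; neither the Leray--Schauder index jump at $\lambda_1$ nor nodal-count rigidity excludes that scenario. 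What your argument never provides is a reason why nontrivial solutions must exist for some $\lambda$ strictly \emph{below} the first bifurcation value, and without that there is no turning point to find.

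The paper supplies precisely this via Lemma \ref{lema-lambda'(0)-CPn}: along the Crandall--Rabinowitz curve $(w(s),\lambda(s))$ at $(0,\lambda_1)$ one gets $\lambda'(0)=-\bigl((q-1)\lambda_1\int_{\mathbb{C}{\bf P}^n}\varphi_1^3\,dv_{g_{FS}}\bigr)/\bigl(2(q-2)\int_{\mathbb{C}{\bf P}^n}\varphi_1^2\,dv_{g_{FS}}\bigr)$, and the explicit eigenfunction $\varphi_1(r)=\frac{n+1}{n}\cos^2(r)-\frac1n$ has $\int\varphi_1^3\,dv_{g_{FS}}\neq 0$ (the computation yields a nonzero multiple of $-(7n+2)$), so $\lambda'(0)\neq 0$: the bifurcation is transcritical and nontrivial solutions exist for some $\lambda<\lambda_1$. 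Combining this with the Bidaut--V\'eron nonexistence result for small $\lambda$ (positive Ricci curvature) and with compactness of the solution set over $\lambda\in[\eta,\lambda_1]$ (a priori bounds from blow-up and subcriticality $q<p_{2n}$), the paper takes the minimal value $\lambda_*$ admitting a nontrivial solution; at a limit point $(w_*,\lambda_*)$, nondegeneracy would allow continuation by the implicit function theorem to $\lambda<\lambda_*$, contradicting minimality. So the degenerate solution arises at the minimum of $\lambda$ over the nontrivial solution set, not as a turning point detected by degree theory. To repair your proposal you would need this cubic-integral computation (or some substitute producing solutions below $\lambda_1$); note that on the round sphere the analogous integral vanishes by antipodal symmetry of the first eigenfunctions, so its nonvanishing on $\mathbb{C}{\bf P}^n$ is genuinely the crux and cannot be assumed.
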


	\begin{Theorem}\label{degen-HPn}
		There exists $\lambda > 0$ such that equation 
		$$-\Delta_{g_{H}} u + \lambda u  = \lambda u^{q-1}.$$
		\noindent with    $q \in [2, p_{4n} )$,  has a nontrivial degenerate solution on   $(\mathbb{H} {\bf P} ^n, g_{FS})$,
		invariant by the action  of the ymplectic group  $Sp(n)$.
	\end{Theorem}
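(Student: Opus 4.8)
\emph{Strategy.} The plan is to carry out, for the $Sp(n)$-action on $(\mathbb{H}{\bf P}^{n},g_{H})$, the same bifurcation argument used for Theorem~\ref{degen-CPn}. Since the action has cohomogeneity one, an $Sp(n)$-invariant function on $\mathbb{H}{\bf P}^{n}$ is a function of a single orbit parameter $t\in[0,L]$, so \eqref{Eq1-HPn} reduces to a singular Sturm--Liouville ODE on $[0,L]$ with Neumann-type conditions at the endpoints. Equivalently, letting $X$ be the Banach space of $Sp(n)$-invariant functions of class $C^{2,\alpha}$ on $\mathbb{H}{\bf P}^{n}$, solutions of \eqref{Eq1-HPn} are the zeros of
$$\Phi(\lambda,u)=u-(-\Delta_{g_{H}}+1)^{-1}\bigl((1+\lambda)u-\lambda u^{q-1}\bigr),$$
a $C^{1}$ map that is a compact perturbation of the identity on $X$, with $\Phi(\lambda,1)\equiv 0$. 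The operator $\partial_{u}\Phi(\lambda,1)$ is singular exactly when $\lambda(q-2)$ lies in the spectrum of $-\Delta_{g_{H}}$ on $Sp(n)$-invariant functions, which is $\{\,4k(k+2n+1):k\geq 0\,\}$, each eigenvalue being simple with zonal eigenfunction. Hence the first bifurcation value is $\Lambda_{1}=8(n+1)/(q-2)$, with one-dimensional kernel spanned by the first zonal eigenfunction $\phi_{1}$, and the Crandall--Rabinowitz theorem yields a $C^{1}$ curve $s\mapsto(\lambda(s),u_{s})$, $s\in(-\delta,\delta)$, of solutions of \eqref{Eq1-HPn} with $u_{0}\equiv 1$, $\lambda(0)=\Lambda_{1}$ and $u_{s}=1+s\phi_{1}+o(s)$, nonconstant for $0<|s|<\delta$.

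\emph{Transcriticality of the first branch.} Next I would compute $\lambda'(0)$ by a Lyapunov--Schmidt expansion at $\Lambda_{1}$: it equals $c\int_{\mathbb{H}{\bf P}^{n}}\phi_{1}^{\,3}$ with $c=-\tfrac{1}{2}(q-1)\Lambda_{1}\bigl(\int_{\mathbb{H}{\bf P}^{n}}\phi_{1}^{\,2}\bigr)^{-1}\neq 0$. Pulling back along the quaternionic Hopf fibration $S^{4n+3}\to\mathbb{H}{\bf P}^{n}$, the zonal eigenfunction $\phi_{1}$ is a nonzero multiple of $|q_{0}|^{2}-\tfrac{1}{n+1}$, and an elementary computation of moments on the round $S^{4n+3}$ gives $\int_{\mathbb{H}{\bf P}^{n}}\phi_{1}^{\,3}\neq 0$ for every $n\geq 1$. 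Therefore $\lambda'(0)\neq 0$: the bifurcation at $\Lambda_{1}$ is transcritical, so the bifurcating branch contains solutions with $\lambda<\Lambda_{1}$.

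\emph{Locating the degenerate solution.} Then I would invoke Rabinowitz's global bifurcation theorem: the connected component $\mathcal C$ of the closure of the set of nonconstant $Sp(n)$-invariant solutions of \eqref{Eq1-HPn} containing $(\Lambda_{1},1)$ is either unbounded in $\mathbb{R}\times X$ or meets the trivial branch at a second bifurcation value (necessarily $\geq\Lambda_{1}$). Because $q<p_{4n}$, the equation is subcritical, so by a blow-up/Liouville argument of Gidas--Spruck type positive solutions with $\lambda$ in a bounded set are uniformly bounded in $C^{0}$, hence precompact in $X$ by elliptic estimates; the same estimates show that for $\lambda$ small a positive solution is $C^{1}$-close to $u\equiv 1$ and so coincides with it, since $\partial_{u}\Phi(\lambda,1)$ is invertible for $\lambda<\Lambda_{1}$. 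Consequently $\lambda_{\min}:=\inf_{\mathcal C}\lambda$ satisfies $0<\lambda_{\min}<\Lambda_{1}$ (the upper bound by transcriticality), and a minimizing sequence, having bounded $\lambda$, subconverges in $X$ to a solution $(\lambda_{\min},u_{0})\in\mathcal C$; here $u_{0}$ is nonconstant, for otherwise $u_{0}\equiv 1$ and $\lambda_{\min}$ would be a bifurcation value, forcing $\lambda_{\min}\geq\Lambda_{1}$. Finally, the linearized operator
$$Lv=-\Delta_{g_{H}}v+\lambda_{\min}v-\lambda_{\min}(q-1)\,u_{0}^{\,q-2}v$$
is Fredholm of index $0$; were it invertible, the implicit function theorem would produce a $C^{1}$ family of nonconstant solutions through $(\lambda_{\min},u_{0})$, parametrized by $\lambda$ near $\lambda_{\min}$ and contained in $\mathcal C$, hence solutions in $\mathcal C$ with $\lambda<\lambda_{\min}$ --- a contradiction. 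Thus $\ker L\neq\{0\}$, so $(\lambda_{\min},u_{0})$ is a nontrivial degenerate solution of \eqref{Eq1-HPn} invariant by $Sp(n)$, which is the assertion of Theorem~\ref{degen-HPn}.

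\emph{Main obstacle.} I expect the two substantive points to be: (i) the a priori $C^{0}$ estimate for subcritical $q$ on $\mathbb{H}{\bf P}^{n}$, which underlies both the compactness of $\mathcal C$ and the nonexistence of nonconstant solutions for small $\lambda$; and (ii) the non-vanishing of the cubic integral $\int_{\mathbb{H}{\bf P}^{n}}\phi_{1}^{\,3}$ that makes the first bifurcation transcritical --- this is a genuine feature of $\mathbb{H}{\bf P}^{n}$ (for $\mathbb{C}{\bf P}^{1}=S^{2}$ the analogous integral vanishes and one would have to pass to a higher-order reduction). Once these are in hand the rest is routine, and the argument is verbatim the one for Theorem~\ref{degen-CPn} with $4k(k+2n+1)$ and $S^{4n+3}$ in place of $4k(k+n)$ and $S^{2n+1}$.
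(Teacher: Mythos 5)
Your proposal follows essentially the same route as the paper: the paper proves Theorem~\ref{degen-HPn} by declaring it ``completely analogous'' to Theorem~\ref{degen-CPn}, whose proof is exactly your scheme --- Crandall--Rabinowitz at the first invariant eigenvalue $\mu_1=\tfrac{8(n+1)}{q-2}$, transcriticality via $\lambda'(0)\propto\int\psi_1^3\,dv_{g_H}$ (this is the content of Lemma~\ref{lema-lambda'(0)-CPn}, which already covers the quaternionic case), a Bidaut--V\'eron lower bound and a Gidas--Spruck blow-up bound giving compactness of the solution set on a compact $\lambda$-interval, and the implicit-function-theorem contradiction at $\lambda_{\min}$. (Your appeal to the Rabinowitz global alternative is not actually needed for this theorem; the paper only uses local bifurcation plus compactness here.)

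One concrete gap, which you in fact half-notice: your claim that $\int_{\mathbb{H}{\bf P}^n}\psi_1^3\,dv_{g_H}\neq 0$ ``for every $n\geq 1$'' fails at $n=1$. Indeed $\psi_1(r)=\tfrac{n+1}{n}\cos^2 r-\tfrac1n$ and the invariant volume density is $\sin^{4n-1}(r)\cos^3(r)\,dr$, so for $n=1$ one gets
\begin{equation*}
\int_0^{\pi/2}\cos^3(2r)\,\tfrac18\sin^3(2r)\,dr=\tfrac1{16}\int_0^{\pi}\cos^3u\,\sin^3u\,du=0,
\end{equation*}
consistent with the fact that $(\mathbb{H}{\bf P}^1,g_H)$ is a round $S^4$ and the first zonal harmonic is odd under the antipodal map --- exactly the phenomenon you flag for $\mathbb{C}{\bf P}^1$. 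So for $n=1$ the bifurcation at $\mu_1$ is not transcritical, no solutions with $\lambda<\mu_1$ are produced, and the whole ``minimal $\lambda$'' argument collapses; one would need the second-order coefficient $\lambda''(0)$ or a different branch. For $n\geq 2$ the integral is a nonzero rational multiple of the volume (e.g.\ it equals $\tfrac1{280}$ times $\mathrm{Vol}(S^7)/n^3$ for $n=2$) and your argument goes through. Be aware that the paper's own Lemma~\ref{lema-lambda'(0)-CPn} asserts nonvanishing without displaying the quaternionic computation, and its stated value $\tfrac{-7n-2}{2(n+2)(n+3)}$ for the complex case is also incorrect (the correct value is $\tfrac{n-1}{(n+2)(n+3)}$, again vanishing at $n=1$); so this is a defect you share with the source rather than one you introduced, but it should be stated as a restriction to $n\geq 2$ or repaired separately for $n=1$.
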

	
	%%%%%%%%%%%%%%%%%%%%%%%%%%%%%%%%%%%%%%%%%%%%%%%%%%%%%%%%%%%%%%%%%%%%%%%%%%%%%%%%%%%%%%%%%%%%%%%%%%%%%%%%%%%%%%%%%%%%%%%%%%%%%%%%%%%%%%%%%%%%%%%%%%%%%%%%%%%%%%

	\section{Solutions of the ordinary differential equation on $\mathbb{C}{\bf P}^n$ near u =1 }
	
	\label{Section1}

	Given a solution $u  \colon \mathbb{C}{\bf P}^n  \to \R_{>0}$ of the equation (\ref{Eq1-CPn})
	we make $ w = u-1$.  Then, $u$ is a solution of (\ref{Eq1-CPn}) if and only if $w$ satisfies
	
	\begin{equation} \label{Eq1-w+1-CPn}
		-\Delta_{g_{FS}} w + \lambda (w+ 1) = \lambda (w+1)^{q-1}.
	\end{equation}
	
	\noindent We denote by $X_I$ the set of  functions on $\mathbb{C}{\bf P}^n$,   invariant by the action of  $U(n)$ and consider the Banach space 
	
	$$C^{2, \alpha} (X_I) := X_I \cap C^{2, \alpha}\left( \mathbb{C} {\bf P}^n \right)$$

	The linearization  of the equation (\ref{Eq1-w+1-CPn})  at $(0, \lambda)$  in   $v \in C^{0, \alpha} (X_I)$  is given by 
	\begin{equation}  \label{Eq-CPn-linear1}
		-\Delta_{g_{FS} }v =(q-2) \lambda v.
	\end{equation}
	
	\noindent It is known    \cite[P.173]{Berger-Gauduchon-Mazet}, that the eigenvalues of  $-\Delta_{g_{FS} }$ on $\mathbb{C}{\bf P}^n$
	are given by   $4k(k+n)$, with $ k =0, 1, 2,\ldots$.  Then the eigenvalues of problem  (\ref{Eq-CPn-linear1}) are given by
	$ \lambda_k :=\frac{4k(k+n)}{q-2}$ with $k=0, 1, 2, \ldots$.

	Now, consider the cohomogeneity one action  of $U(n)$  on  $\mathbb{C}{\bf P}^n$. The orbit space is isometric to
	$[0, \frac{\pi}{2}]$. Let $ f \colon \mathbb{C}{\bf P}^n \to [0,\frac{\pi}{2}]$ be the quotient  function. The singular orbits of this action are $ f^{-1}(0)$, where $p_0 \in \mathbb{C}{\bf P}^n$ is a fixed point under the action of  $U(n)$   and $f^{-1}\left(\frac{\pi}{2}\right)$. The regular orbits  are $M_{r} :=f^{-1} (r)$
	with  $ r \in (0, \frac{\pi}{2})$.  For more details see  \cite{Alexandrino-Betiol}.

	\noindent If   $w  \in C^{2, \alpha}(X_I)$ is a solution of (\ref{Eq1-w+1-CPn}), invariant by the action of $U(n)$,  then $w = \varphi \circ f $,
	where $\varphi \colon \left[0, \frac{\pi}{2}\right] \to \R$  and  $u$ is  $C^2$ if  $\varphi $ is  $C^2$. Moreover, the Laplacian of $w $ is given by 
	
	$$\Delta_{g_{FS}}w  =  (\varphi'' \circ f ) \bigl|\nabla_{g_{FS}} f \bigr|_{g_{FS}}^2  + (\varphi' \circ f )\Delta_{g_{FS}}f
	=(\varphi'' \circ f )   + (\varphi' \circ f )\Delta_{g_{FS}}f$$
	
	\noindent Therefore,  $w $ is a solution of   (\ref{Eq1-w+1-CPn}) if and only if $\varphi$ is a solution of 
	
	\begin{equation} \label{EDO-nonlinear-CPn}
		-\varphi''(r) - \left(\frac{2n \cos^2 (r)-1}{\cos (r) \sin (r)}\right)  \varphi'(r) + \lambda (\varphi(r) +1) =  \lambda (\varphi(r)+1)^{q-1}
	\end{equation}
	
	\noindent with the initial conditios and  $\varphi(0) =1 $ and $\varphi'(0) =0$ and such that $ \varphi'(\pi/2) =0 $.
	\noindent

	\noindent Similarly,   $v = \varphi \circ f \in C^{2, \alpha}(X_I)$  is a solution  of (\ref{Eq-CPn-linear1}), where  $\varphi$ es $C^2$, if an only if   $\varphi $  satisfies
	
	\begin{equation}\label{Eq-General-CPn-one-variable-linear}
		\varphi''(r) + \left(\frac{2n \cos^2 (r)-1}{\cos (r) \sin (r)}\right) \varphi'(r)  +\lambda (q-2) \varphi(r)=0
	\end{equation}
	
	\noindent where $r \in \left[0,  \frac{\pi}{2}\right]$ and $\varphi(0) =1 $, $\varphi'(0) =0= \varphi'(\pi/2) $.

	Now we shall prove  that for every positive integer
	$k$,  the equation  (\ref{Eq-General-CPn-one-variable-linear}) has a
	solution corresponding to the eigenvalue $\lambda_k$ and that this
	solution is a polynomial in  $\cos^2$.

	\begin{Lemma} \label{lemma-polynomial-CPn}
		For every positive integer  $k$, the solution of equation  (\ref{Eq-General-CPn-one-variable-linear}), with  $\lambda  = \lambda_k = \frac{4k(k+n)}{q-2} $
		has the form $$\varphi_k(r):= p_k(\cos^2 (r)),$$
		 \noindent  where $p_k$  is a polynomial o degree $k$ and     $r \in \left[0, \frac{\pi}{2}\right]$.
	\end{Lemma}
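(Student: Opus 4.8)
The plan is to reduce the ODE \eqref{Eq-General-CPn-one-variable-linear} to a hypergeometric-type equation by the substitution $t = \cos^2(r)$, and then exhibit a polynomial solution directly. First I would compute, with $t = \cos^2(r)$, that $\frac{dt}{dr} = -2\cos(r)\sin(r) = -2\sqrt{t(1-t)}$ and $\frac{d^2t}{dr^2} = -2(1-2t)$, so for $\varphi(r) = p(t)$ we get $\varphi'(r) = -2\sqrt{t(1-t)}\, p'(t)$ and $\varphi''(r) = 4t(1-t)p''(t) - 2(1-2t)p'(t)$. Substituting into \eqref{Eq-General-CPn-one-variable-linear} and using $\cos^2(r) = t$, $\sin^2(r) = 1-t$, the coefficient of $\varphi'(r)$ becomes $\frac{2nt - 1}{\sqrt{t(1-t)}}$, and the whole equation collapses — the square roots cancel — to a second-order ODE with polynomial coefficients:
\begin{equation}\label{hypergeom-CPn}
t(1-t)p''(t) + \left(n - (n+1)t\right)p'(t) + k(k+n)\, p(t) = 0,
\end{equation}
after substituting $\lambda(q-2) = 4k(k+n)$ and dividing by $4$. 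This is a Gauss hypergeometric equation $t(1-t)p'' + (c - (a+b+1)t)p' - ab\, p = 0$ with $c = n$, $a+b+1 = n+1$, and $ab = -k(k+n)$; solving, $a = -k$, $b = k+n$.

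The second step is to extract the polynomial solution and verify the boundary conditions. Since $a = -k$ is a negative integer, the hypergeometric series ${}_2F_1(-k, k+n; n; t)$ terminates, giving a polynomial $p_k(t)$ of degree exactly $k$ (the degree-$k$ coefficient is a nonzero multiple of the Pochhammer product, which does not vanish because $c = n$ is not a nonpositive integer). This defines $\varphi_k(r) = p_k(\cos^2 r)$. I then check the boundary/initial data: at $r = 0$, $t = 1$, and I normalize $p_k(1) = 1$ so that $\varphi_k(0) = 1$; the condition $\varphi_k'(0) = 0$ and $\varphi_k'(\pi/2) = 0$ follow automatically because $\varphi_k'(r) = -2\sqrt{t(1-t)}\,p_k'(t)$ has the factor $\sqrt{t(1-t)}$ which vanishes at both $t = 0$ and $t = 1$ — equivalently, $\varphi_k$ is a smooth even function of $r$ near $0$ and of $r - \pi/2$ near $\pi/2$. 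One should also confirm $\varphi_k = p_k(\cos^2 r)$ is genuinely $C^2$ (indeed $C^\infty$) on $[0,\pi/2]$, which is clear since $\cos^2 r$ is smooth. Finally, reversing the chain-rule computation shows any such $\varphi_k$ solves \eqref{Eq-General-CPn-one-variable-linear}, so it is the solution corresponding to the eigenvalue $\lambda_k$.

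The main obstacle — really the only delicate point — is verifying that the singular terms in the ODE behave correctly. The coefficient $\frac{2n\cos^2 r - 1}{\cos r \sin r}$ blows up at both endpoints $r = 0$ and $r = \pi/2$, so a priori one must argue that the formal polynomial solution actually satisfies the equation and the Neumann conditions there rather than merely on the open interval. This is handled by the observation above that the substitution $t = \cos^2 r$ turns \eqref{Eq-General-CPn-one-variable-linear} into \eqref{hypergeom-CPn}, whose coefficients are polynomials and whose regular singular points $t = 0, 1$ are precisely the images of the endpoints; the indicial analysis there (exponents $0$ and $1-c = 1-n$ at $t=0$, exponents $0$ and $c-a-b = 0$ at $t=1$) shows the polynomial branch is the regular one, which is exactly what encodes smoothness of $w = \varphi_k \circ f$ across the singular orbits of the $U(n)$-action. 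With that in hand the lemma follows; the remaining computations are the routine substitution already indicated and the identification of the hypergeometric parameters.
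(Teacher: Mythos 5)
Your strategy---substituting $t=\cos^2 r$ to turn (\ref{Eq-General-CPn-one-variable-linear}) into a Gauss hypergeometric equation whose series terminates---is sound and genuinely different from the paper's argument, which never changes variables: the paper computes $L_{\mu}(\cos^{2m} r) = (\mu - 4m(m+n))\cos^{2m} (r) + 4m^2\cos^{2m-2}(r)$ and builds $p_k$ recursively from the leading coefficient down, the choice $\mu=4k(k+n)$ being exactly what kills the degree-$k$ term. Your route buys a closed form for $p_k$ as a terminating ${}_2F_1$ and an explicit indicial-exponent explanation of why the polynomial is the regular branch at both singular orbits, a point the paper leaves implicit.

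However, the reduction as you wrote it contains a concrete computational error. With $t=\cos^2 r$ one has $t''(r) = -2\cos 2r = 2(1-2t)$, not $-2(1-2t)$, so $\varphi''(r) = 4t(1-t)p''(t) + 2(1-2t)p'(t)$; combining this with the first-order term, which transforms to $-2(2nt-1)p'(t)$, the correct reduced equation is
$$t(1-t)p''(t) + \bigl(1-(n+1)t\bigr)p'(t) + k(k+n)\,p(t) = 0,$$
i.e. $c=1$, $a=-k$, $b=k+n$, and not $c=n$. (The equation you state, with coefficient $n-(n+1)t$, is the one produced by the substitution $t=\sin^2 r$; note it does not even follow from your own sign-flipped formula for $t''$, which would give middle coefficient $(1-n)t$.) A quick sanity check: the paper's explicit eigenfunction $\varphi_{\alpha_1}(r)=\frac{n+1}{n}\cos^2 r-\frac{1}{n}$, i.e. $p_1(t)=\frac{n+1}{n}t-\frac{1}{n}$, satisfies $t(1-t)p_1''+(1-(n+1)t)p_1'+(n+1)p_1=0$ but fails your equation when $n\ge 2$. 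The conclusion of the lemma survives the correction: ${}_2F_1(-k,\,k+n;\,1;\,t)$ still terminates at degree exactly $k$, and its value at $t=1$ is $(-1)^k\,\frac{n(n+1)\cdots(n+k-1)}{k!}\neq 0$, so the normalization $\varphi_k(0)=1$ is possible; but the indicial exponents you cite must be recomputed for $c=1$ (at $t=0$, i.e. $r=\pi/2$, the exponents are $0,0$ with a logarithmic second solution; at $t=1$, i.e. $r=0$, they are $0$ and $1-n$), after which your regular-branch argument goes through. Alternatively, keep your stated equation but declare $t=\sin^2 r$, since a degree-$k$ polynomial in $\sin^2 r$ is also one in $\cos^2 r$.
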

	
	\begin{proof}
		
		Define the operator 
		
		\begin{equation} \label{operator-L_mu}
			L_{\mu}(\varphi) :=  \varphi''(r) + \left(\frac{2n \cos^2 (r)-1}{\cos (r) \sin (r)}\right) \varphi'(r)  +\mu \varphi.
		\end{equation}
		
		\noindent Note that $L_{\mu} (\varphi)=0 $ is the same eigenvalue equation  (\ref{Eq-General-CPn-one-variable-linear}).  Denote by  $\varphi_{\mu}$ the corresponding solution of (\ref{Eq-General-CPn-one-variable-linear}) and   let $\mu = \alpha_k = 4k(k+n)$.   Then we can compute $\varphi_{\alpha_k}$ explicitely, for instance
		$$\varphi_{\alpha_1}(r)= \frac{n+1}{n} \cos^2(r) -\frac{1}{n}, $$
		$$\varphi_{\alpha_1}(r) = \frac{1}{n(n+1)}\Bigl((n+2)(n+3) \cos^4(r) -4(n+2)\cos^2(r) +2\Bigr)   $$

		\noindent Note that for every integer $m \geq1$ 
		
		$$L_{\mu}(\cos^{2m} t) = (\mu -4m(m+n))\cos^{2m} (r)+ 4m^2 \cos^{2m-2} (r) . $$

		So, for $k >2$ the formulas for $\varphi_k : =  \varphi_{\alpha_k}$ can then be found recursively
		
	\end{proof}
	
	Next we prove that every polynomial $\varphi_k$, found in
	the previous lemma, has $k$ single zeros in $\left(0,
	\frac{\pi}{2}\right)$.
	For this we use induction on $k$ and Sturm's comparison theorem.\\

	\begin{Lemma} \label{ceros-vk-CPn}
		The polynomial  $\varphi_k (r) = p_k(\cos^2(r))$ has  $k$ zeroes
		simples in $\left(0, \frac{\pi}{2}\right)$.
	\end{Lemma}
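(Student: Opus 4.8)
The plan is to proceed by induction on $k$, using Sturm's comparison theorem to control how the number of sign changes grows as the eigenvalue parameter increases from $\alpha_{k-1}=4(k-1)(k-1+n)$ to $\alpha_k=4k(k+n)$. First I would rewrite the ODE $L_{\mu}(\varphi)=0$ in the Sturm--Liouville self-adjoint form. Multiplying \eqref{Eq-General-CPn-one-variable-linear} by the integrating factor $\rho(r)=\cos(r)\,(\sin r)^{2n-1}$ (which is the density appearing in the volume of the $U(n)$-orbits, so it is the natural weight), the equation becomes $\bigl(\rho(r)\varphi'(r)\bigr)' + \mu\,\rho(r)\varphi(r)=0$ on $(0,\pi/2)$, with $\rho>0$ on the open interval and $\rho$ vanishing at both endpoints. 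The boundary conditions $\varphi'(0)=\varphi'(\pi/2)=0$ are exactly the ones making this a regular-type eigenvalue problem (after the usual check that the finite-energy solutions are automatically the ones with vanishing Neumann data at the singular points). In this form $\varphi_k=p_k(\cos^2 r)$ is the eigenfunction for the eigenvalue $\alpha_k$, and standard Sturm--Liouville oscillation theory tells us the $k$-th eigenfunction (counting from $k=0$) has exactly $k$ zeros in the open interval, all simple because a zero of multiplicity two would force $\varphi_k\equiv 0$ by uniqueness for the ODE.

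Concretely, for the induction I would argue as follows. The base cases $k=0$ ($\varphi_0\equiv 1$, no zeros) and $k=1$ ($\varphi_1 = \frac{n+1}{n}\cos^2 r - \frac1n$, one zero at $\cos^2 r = \frac1{n+1}$) are checked by the explicit formulas from Lemma 2.5. For the inductive step, assume $\varphi_{k-1}$ has exactly $k-1$ simple zeros $0<r_1<\dots<r_{k-1}<\pi/2$. Apply Sturm's comparison theorem to the pair of equations $(\rho u')'+\alpha_{k-1}\rho u=0$ and $(\rho v')'+\alpha_k\rho v=0$: since $\alpha_k>\alpha_{k-1}$, between any two consecutive zeros of $\varphi_{k-1}$ the solution $\varphi_k$ has at least one zero, giving at least $k-2$ zeros strictly between $r_1$ and $r_{k-1}$; one then uses the Neumann condition at the endpoints together with a comparison on $(0,r_1)$ and on $(r_{k-1},\pi/2)$ to produce two more zeros, for a total of at least $k$. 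The reverse inequality — that $\varphi_k$ has \emph{no more than} $k$ zeros — follows because $\varphi_k$ is a polynomial of degree $k$ in the variable $t=\cos^2 r$ (Lemma 2.5), and $t\mapsto \cos^2 r$ is a monotone bijection from $(0,\pi/2)$ onto $(0,1)$, so $\varphi_k$ can vanish at most $k$ times on $(0,\pi/2)$. Combining the two bounds gives exactly $k$ zeros, and simplicity again follows from uniqueness for the second-order ODE (a double zero with $\varphi_k'=0$ there would force $\varphi_k\equiv0$).

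The main obstacle I anticipate is the careful handling of the endpoints $r=0$ and $r=\pi/2$, where the coefficient $\frac{2n\cos^2 r - 1}{\cos r \sin r}$ is singular and the weight $\rho$ degenerates. Sturm's comparison theorem in its textbook form applies on intervals where the equation is regular, so one must either (i) work on slightly shrunk intervals $[\e,\pi/2-\e]$ and pass to the limit, using that $\varphi_k$ extends smoothly (indeed polynomially in $\cos^2 r$) up to the endpoints and that $\varphi_k'(0)=\varphi_k'(\pi/2)=0$, or (ii) invoke the Sturm--Liouville theory for singular problems with limit-circle endpoints, where the "Neumann" condition $\varphi'=0$ is the correct self-adjoint boundary condition and the oscillation count is the classical one. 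Either route is routine but needs to be stated, since the whole point is to get the exact count $k$, not just a lower bound. A secondary technical point worth a sentence is verifying that the polynomial $p_k$ does not have a zero exactly at $t=0$ or $t=1$, i.e. that all $k$ zeros lie in the \emph{open} interval; this can be read off from the recursion in Lemma 2.5 (evaluating $\varphi_k$ and the ODE at the endpoints shows $\varphi_k(0)\neq0\neq\varphi_k(\pi/2)$), or again from the fact that those would be boundary zeros incompatible with the Neumann eigenfunction structure.
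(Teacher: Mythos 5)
Your plan follows the same overall strategy as the paper: induction on $k$, explicit base cases from Lemma \ref{lemma-polynomial-CPn}, Sturm comparison between the equations for consecutive eigenvalues $\alpha_{k-1}<\alpha_k$, and simplicity of the zeros from uniqueness for the second order ODE. The genuine differences are that you first pass to the self-adjoint form $\bigl(\rho\varphi'\bigr)'+\mu\rho\varphi=0$ with $\rho(r)=\cos (r)\sin^{2n-1}(r)$, and that you make the upper bound explicit (at most $k$ zeros because $p_k$ has degree $k$ and $\cos^2$ is monotone on $(0,\pi/2)$), a point the paper leaves implicit. The paper instead works with the raw equation and a more involved Picone-type quantity $\frac{\varphi_k}{\varphi_{k+1}}\bigl(\varphi_k'\varphi_{k+1}-\varphi_k\varphi_{k+1}'\bigr)$, with a case analysis according to the sign of the coefficient $q(r)$; your weighted formulation is cleaner and, if completed, yields a shorter argument.

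The one step you leave as an assertion, namely producing a zero of $\varphi_k$ in the last interval $(r_{k-1},\pi/2)$, is exactly where the paper's proof does all of its work, and your remedy (i) (shrink to $[\varepsilon,\pi/2-\varepsilon]$ and pass to the limit) does not deliver it: $\varphi_{k-1}$ has no zero to the right of $r_{k-1}$, so the textbook ``zero between two consecutive zeros'' form of Sturm's theorem has nothing to bite on in that interval, however you truncate. What is needed is a comparison that actually uses the Neumann condition at the singular endpoint. In your variables this is short: set $W=\rho\,\bigl(\varphi_k'\varphi_{k-1}-\varphi_k\varphi_{k-1}'\bigr)$; then $W'=(\alpha_{k-1}-\alpha_k)\,\rho\,\varphi_k\varphi_{k-1}$, $W(\pi/2)=0$ because $\rho$ vanishes there while the $\varphi$'s are smooth up to the endpoint, and $W(r_{k-1})=-\rho(r_{k-1})\varphi_k(r_{k-1})\varphi_{k-1}'(r_{k-1})$; if $\varphi_k$ had no zero on $(r_{k-1},\pi/2)$, integrating $W'$ over that interval gives contradictory signs on the two sides of the identity. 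This is in substance the paper's contradiction argument rewritten in self-adjoint form, and it (or your remedy (ii), singular Sturm--Liouville oscillation theory at a limit-circle endpoint, which in turn requires justifying that $\alpha_k$ is the $(k+1)$-st eigenvalue of the one-dimensional problem, i.e.\ that no other eigenvalues interlace) must actually be carried out for your plan to close.
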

	
	\begin{proof}
		For $k = 1, 2$, the polynomials $\varphi_1$ and $\varphi_2$ have 1 and
		2 single zeros, respectively, in $\left(0, \frac{\pi}{2} \right) $. Suppose
		that it is also true for $k$ and let us prove that it holds for $k+1$.
		Let $r_1, r_2, \ldots, r_k$ be the $r_1, r_2, \ldots, r_k$ the
		zeros of $\varphi_k$ in $(0, \frac{\pi}{2})$.
		Since $\varphi_k$ and $\varphi_{k+1}$ are solutions of
		(\ref{Eq-General-CPn-one-variable-linear}) one has

		$$ \varphi_k''(r) + \left(\frac{ 2n \cos^2 (r)-1}{\cos (r) \sin (r)}\right) \varphi_k'(r)  +4k(n+k) \varphi_k(r) =0, $$
		$$ \varphi_{k+1}''(r) + \left(\frac{2n \cos^2 (r)-1}{\cos (r) \sin (r)}\right) \varphi_{k+1}'(r)  +4(k+1)(n+k+1) \varphi_{k+1}(r) =0$$
		
		\noindent Note that $4(k+1)( n+k+1)  >  4k(n+k),  $  and since 
		$$\varphi_k(0) =1, \;\;\;\;\varphi_k'(0) =0,\;\;\;\; \varphi_{k+1}(0) =1,\;\;\;\;  \varphi_{k+1}'(0)= 0,  $$
		
		\noindent  we have
		$$ \frac{ \varphi_{k+1}'(0)}{\varphi_{k+1}(0)} \leq \frac{\varphi_k'(0)}{\varphi_k(0) }$$
		
		\noindent Then, by  the Sturm's comparison theorem
		(see for instance \cite[Page 229]{Ince}),  between every two zeros of $\varphi_k$ there is at least one zero of $\varphi_{k+1}$ and the $i$-th zero of $\varphi_{k+1}$ is less than the $i$-th zero of $\varphi_k$, therefore $\varphi_{k+1}$ is less than the $i$-th zero of $\varphi_k$, so 	therefore $\varphi_{k+1}$ has at least $k$ single zeros: one in each interval $(0, r_1), \; (r_1 ,r_2), \ldots , (r_{k-1}, r_{k})$. 
		
		To see that $\varphi_{k+1} $ has a single zero in $(r_k, \frac{\pi}{2})$, note that $\varphi_{k+1}(r_k) \neq 0 $ since the $k$-zero of $\varphi_{k+1}  $  occurs before $r_k$. So, on the right side of $r_k$ we have.
		
		$$\frac{ \varphi_{k+1}'(r_k)}{ \varphi_{k+1}(r_k) } < \frac{ \varphi_{k}'(r_k)}{ \varphi_{k}(r_k)} = +\infty$$.
		
		\noindent Then, by Sturm's comparisons theorem, for every  $r \in \left(r_k, \frac{\pi}{2}\right)$.

		\begin{equation} \label{Inequality-Sturm-3}
			\frac{\varphi_{k+1}'(r)}{\varphi_{k+1}(r) } <  \frac{
				\varphi_{k}'(r)}{\varphi_{k}(r)}
		\end{equation}

		\noindent Suppose by contradiction that $\varphi_{k+1}$ does not vanishes in $\left(r_k, \frac{\pi}{2} \right) $ and also suppose that $\varphi_k'(r_k)<0$.
		Since $r_k$ is the last zero of $\varphi_k$ then $\varphi_k$ remains negative and 	decreasing in the interval 	$\left(r _k,\frac{\pi}{2} \right)$ and by the above assumption, $\varphi_{k+1}$ also remains negative in 
		$\left(r_k, \frac{\pi}{2}\right) $.
		
		\noindent Let us write	
		$$q(r) := \frac{2n \cos^2 (r)-1}{\cos (r) \sin (r)}. $$
		Note that 
		\begin{eqnarray*}
			\left(\frac{\varphi_k}{\varphi_{k+1}}\Bigl( \varphi_k'\varphi_{k+1 } - \varphi_k\varphi_{k+1}'\Bigr)\right)' &=& \left(\frac{\varphi_k'\varphi_{k+1} - \varphi_k\varphi_{k+1}' }{\varphi_{k+1}} \right)^2 + \frac{\varphi_k}{\varphi_{k+1}}q(r) \left(\varphi_k \varphi_{k+1}' -\varphi_k' \varphi_{k+1} \right) \\\\
			& & +\varphi_k^2 (\mu_{k+1}-\mu_k)
		\end{eqnarray*}
		
		\noindent The first and third terms on the right are both positive.  Let's look at the second term. By the inequality (\ref{Inequality-Sturm-3}) we have  $\varphi_k' \varphi_{k+1} -\varphi_k \varphi_{k+1}' > 0$ on  $\left(r_k, \frac{\pi}{2}\right) $.  
		
		\noindent On the other hand, note that
		\begin{equation}
			q(r)  < 0   \;\;\mbox{if} \;\; \cos (r) <
			\frac{1}{\sqrt{2n}}\;\;\;\mbox{and}\;\;\; q(r) >0 \;\;\mbox{if}\;\;
			\cos (r) > \frac{1}{\sqrt{2n}}.
		\end{equation}
		Let  $r_0 \in \left( 0, \frac{\pi}{2}\right)$ such that 
		$\cos (r_0) = 	\frac{1}{\sqrt{2n}}$.  Since the cosine function is decreasing at
		$\left( 0, \frac{\pi}{2}\right)$, we have 
		$q(r) < 0   \;\;\;\mbox{if}  \;\; \;  r > r_0    \;\;\;\;\mbox{and  } \;\; \;\;   q(r) > 0  \;\;\; \mbox{if} \; \;\; r < r_0$. 
		
		Suposse that $r_k < r_0$.  Integrating  from $r_0$ to
		$\frac{\pi}{2}$ and using that    $\varphi_{k+1}'\left(\frac{\pi}{2}\right)=0
		= \varphi_{k}'\left(\frac{\pi}{2}\right)$ we get 
		
	\begin{eqnarray*}
		-\left(\frac{\varphi_k}{\varphi_{k+1}}\left( \varphi_k'\varphi_{k+1}
		- \varphi_k\varphi_{k+1}' \right)\right)\left( r_0\right)
		&=& \int_{r_0 }^{\frac{\pi}{2}} \left(\frac{\varphi_k'\varphi_{k+1} - \varphi_k\varphi_{k+1}' }{\varphi_{k+1}} \right)^2dr  \\
		&&+ \int_{r_0}^{\frac{\pi}{2}} \frac{\varphi_k}{\varphi_{k+1}}q(r) \left(\varphi_k \varphi_{k+1}' -\varphi_k' \varphi_{k+1} \right)dr\\
		&& +  \int_{r_0}^{\frac{\pi}{2}} \varphi_k^2 (\mu_{k+1}-\mu_k)dr
	\end{eqnarray*}
	
	\noindent Note that the left-hand side of the equality is negative.
	On the other hand, by hypothesis both $\varphi_k$ and $\varphi_{k+1}$ remains 	negative in $\left(r_0, \frac{\pi}{2} \right) $, from which
	we obtain $ \frac{\varphi_k}{\varphi_{k+1}} >0$.
	It is also satisfied that 
	$\varphi_k \varphi_{k+1}' -\varphi_k' \varphi_{k+1} <0$ at
	$\left(r_0,  \frac{\pi}{2} \right)$ by the inequality (\ref{Inequality-Sturm-3}) and furthermore $q(r) < 0$ in 
	$\left(r_0, \frac{\pi}{2}\right)$. It follows that
	the right hand side of the above equality is positive, which is a
	contradiction.

	\noindent Now suppose that  $r_k >  r_0$. Analogous to the 
	previous case, we have that in $\left(r_k, \frac{\pi}{2}\right)$
	the following inequalities are satisfied 
	$ \frac{\varphi_k}{\varphi_{k+1}} >0$,    $\varphi_k \varphi_{k+1}' -\varphi_k' \varphi_{k+1} <0  $ y $q(r) > 0$.
	Integrating from $r_k$  to  $\frac{\pi}{2}$.
	and using the contitions  $\varphi_k(r_k) =0$ and 
	$\varphi_{k+1}'\left(\frac{\pi}{2}\right)=0 =
	\varphi_{k}'\left(\frac{\pi}{2}\right)$ we have
	
	\begin{eqnarray*}
		0 &= & \left.  \left(\frac{\varphi_k}{\varphi_{k+1}}\left( \varphi_k'\varphi_{k+1} - \varphi_k\varphi_{k+1}' \right)\right)'\right|_{r_k}^{\frac{\pi}{2}}\\
		&=&  \int_{r_k}^{\frac{\pi}{2}} \left(\frac{\varphi_k'\varphi_{k+1} - \varphi_k\varphi_{k+1}' }{\varphi_{k+1}} \right)^2dr  + \int_{r_k}^{\frac{\pi}{2}} \frac{\varphi_k}{\varphi_{k+1}}q(r) \left(\varphi_k \varphi_{k+1}' -\varphi_k' \varphi_{k+1} \right)dr\\
		&& +  \int_{r_k}^{\frac{\pi}{2}} \varphi_k^2 (\mu_{k+1}-\mu_k)dr\\
		& &  >  0
	\end{eqnarray*}
	
	\noindent which cannot be. Therefore $\varphi_{k+1}$ must
	have a zero to the right of $r_k$.  This finishes the proof of lemma (\ref{ceros-vk-CPn}).
\end{proof}

%---------------------------------------------------------------------------------------------------------

\section{Solutions of the ordinary differential equation on $\mathbb{H}{\bf P}^n$ near u =1 }
%---------------------------------------------------------------------------------------------------------
Now we study the number of zeros of  solutions of the  Yamabe-type equation  (\ref{Eq1-HPn}) on   $\mathbb{H}{\bf P}^n$.
In this case  $2 < q  < p_{4n}:= \frac{8n}{4n-2}$ and  $ g_{H}$ is the Fubini-Study metric.
Given a solution $u  \colon \mathbb{H}{\bf P}^n  \to \R_{>0}$ of the equation (\ref{Eq1-CPn})
we make $ \tilde{w} = u-1$.  Then, $u$ is a solution of (\ref{Eq1-CPn}) if and only if $w$ satisfies

\begin{equation} \label{Eq1-w+1-HPn}
	-\Delta_{g_{H}} \tilde{w} + \lambda (\tilde{w}+ 1) = \lambda (\tilde{w}+1)^{q-1}.
\end{equation}

The linearization  of the equation (\ref{Eq1-w+1-HPn})  at $(0, \lambda)$  in   $v \in C^{0, \alpha} (X_I)$  is given by 
\begin{equation}  \label{Eq-HPn-linear1}
	-\Delta_{g_{H} }v =(q-2) \lambda v.
\end{equation}

\noindent It is known    \cite[P.202]{Besse}, that the eigenvalues of  $-\Delta_{g_{H} }$ on $\mathbb{H}{\bf P}^n$
are given by   $4k(k+2n+ 1)$, with $ k =0, 1, 2,\ldots$.  Then the eigenvalues of problem  (\ref{Eq-HPn-linear}) are given by
$ \mu_k :=\frac{4k(k+2n+1)}{q-2}$ with $k=0, 1, 2, \ldots$

Consider the cohomogeneity one action  of $Sp(n)$  on  $\mathbb{H}{\bf P}^n$. The orbit space is isometric to $[0, \frac{\pi}{2}]$. 
Let $ f \colon \mathbb{H}{\bf P}^n \to [0,\frac{\pi}{2}]$ be the quotient  function. The singular orbits of this action are $p_0  = f^{-1}(0)$, where $p_0 \in \mathbb{H}{\bf P}^n$ is a fixed point under th action of  $Sp(n)$   and $\mathbb{H} {\bf P}^{n-1} = f^{-1}\left(\frac{\pi}{2}\right)$. The regular orbits  of this action are  $M_{r} :=f^{-1} (r)$ wit  $ r \in (0, \frac{\pi}{2})$.  For more details see  \cite{Alexandrino-Betiol}.
\noindent 	We denote by $Y_I$ the set of smooth functions on $\mathbb{H} {\bf P}^n$ invariant under the action of the simplectic group $Sp(n)$ on $\mathbb{H} P^n$. Let us consider the Banach space

$$C^{2, \alpha} (Y_I) := Y_I \cap C^{2, \alpha}\left( \mathbb{H}{\bf P}^n \right). $$ 
If   $u  \in C^{2, \alpha}(Y_I)$ is a solution of (\ref{Eq1-HPn}), then $u = \psi \circ f $,
where $\psi \colon \left[0, \frac{\pi}{2}\right] \to \R$  and  $u$ is  $C^2$ if  $\psi $ is  $C^2$ and  $\psi'(0) =0 = \psi'(\pi/2) $.
Therefore,  $u $ is a solution of   (\ref{Eq1-HPn}) if and only if $\psi$ is a solution of 

\begin{equation} \label{EDO-nonlinear-HPn}
	-\psi''(r) - \left(\frac{(4n +2)\cos^2 (r)-3}{\cos (r) \sin (r)}\right)  \psi'(r) + \lambda \psi(r) =  \lambda \psi(r)^{q-1}
\end{equation}

\noindent Similarly,   $v = \psi \circ f \in C^{2, \alpha}(Y_I)$  is a solution  of (\ref{Eq-HPn-linear}), where  $\psi$ es $C^2$, if an only if   $\psi $  satisfies

\begin{equation}\label{Eq-General-HPn-one-variable-linear}
	\psi''(r) + \left(\frac{(4n +2)\cos^2 (r)-3}{\cos (r) \sin (r)}\right) \psi'(r)  +\lambda (q-2) \psi(r)=0
\end{equation}

\noindent where $r \in \left[0,  \frac{\pi}{2}\right]$. We are looking for solutions such that  $\psi'(0) =0 = \psi'(\pi/2) $.

Now we    show that for every positive integer
$k$,  the equation  (\ref{Eq-General-HPn-one-variable-linear}) has a
solution corresponding to the eigenvalue $\mu_k$ and that this
solution is a polynomial in  $\cos^2$. Also we prove   that this solution contains $k $ zeros in the interval $\left(0, \frac{\pi}{2}\right)$ all of them  simples.

\begin{Lemma} \label{lemma-polynomial-HPn}
	For every positive integer  $k$, the solution of equation  (\ref{Eq-General-HPn-one-variable-linear}), with  $\lambda  = \mu_k = \frac{4k(k+2n+1)}{q-2} $
	has the form $$\psi_k(r):= \tilde{p}_k(\cos^2 (r)),$$   where $\tilde{p}_k$  is a polynomial o degree $k$ and     $r \in \left[0, \frac{\pi}{2}\right]$.
\end{Lemma}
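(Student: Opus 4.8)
The plan is to mimic the proof of Lemma~\ref{lemma-polynomial-CPn}. Set $\mu := \lambda(q-2)$ and define the operator
$$\widetilde{L}_{\mu}(\psi) := \psi''(r) + \left(\frac{(4n+2)\cos^2(r) - 3}{\cos(r)\sin(r)}\right)\psi'(r) + \mu\,\psi(r),$$
so that equation \eqref{Eq-General-HPn-one-variable-linear} is precisely $\widetilde{L}_{\mu}(\psi)=0$; when $\lambda = \mu_k$ we have $\mu = 4k(k+2n+1)$, and we seek the solution with $\psi(0)=1$ and $\psi'(0)=0$.

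The computational heart of the argument is the identity, valid for every integer $m\ge 1$,
$$\widetilde{L}_{\mu}\!\left(\cos^{2m}(r)\right) = \bigl(\mu - 4m(m+2n+1)\bigr)\cos^{2m}(r) + 4m(m+1)\cos^{2m-2}(r),$$
together with $\widetilde{L}_{\mu}(1)=\mu$. This follows from a direct computation: differentiate $\cos^{2m}(r)$ twice, replace $\sin^2(r)$ by $1-\cos^2(r)$, and simplify the drift term using $\cos^{2m-1}(r)\sin(r)/(\cos(r)\sin(r)) = \cos^{2m-2}(r)$; in particular the apparent singularities at $r=0$ and $r=\pi/2$ cancel. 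Thus $\widetilde{L}_{\mu}$ preserves the span of $\{1,\cos^2(r),\dots,\cos^{2k}(r)\}$ and acts on it in ``almost triangular'' form, exactly as in the proof of Lemma~\ref{lemma-polynomial-CPn}.

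Next, look for $\psi_k(r) = \widetilde{p}_k(\cos^2(r)) = \sum_{j=0}^{k} a_j\cos^{2j}(r)$ with $a_k\neq 0$ and $\mu = 4k(k+2n+1)$. Applying $\widetilde{L}_{\mu}$ and collecting the coefficient of $\cos^{2i}(r)$: the coefficient of $\cos^{2k}(r)$ vanishes identically (this is exactly the choice of eigenvalue $\mu_k$), while for $0\le i\le k-1$ one gets the recursion
$$\bigl(\mu - 4i(i+2n+1)\bigr)\,a_i + 4(i+1)(i+2)\,a_{i+1} = 0.$$
The single point that needs a (short) justification is that $\mu - 4i(i+2n+1) = 4(k-i)(k+i+2n+1)\neq 0$ for $0\le i<k$, which holds since $t\mapsto t(t+2n+1)$ is strictly increasing on $[0,\infty)$. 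Hence $a_{k-1},\dots,a_0$ are determined from the top down once $a_k$ is fixed, and the recursion shows $a_i\neq 0$ for all $i$, so $\widetilde{p}_k$ has exact degree $k$. Finally choose $a_k$ so that $\widetilde{p}_k(1)=\sum_{j=0}^{k}a_j = 1$, i.e. $\psi_k(0)=1$; the conditions $\psi_k'(0)=0=\psi_k'(\pi/2)$ are automatic because $\psi_k$ is a polynomial in $\cos^2(r)$ and $\frac{d}{dr}\cos^2(r) = -2\cos(r)\sin(r)$ vanishes at both endpoints.

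To finish, one identifies $\psi_k$ with \emph{the} solution distinguished in the statement: the reduced ODE has a regular singular point at $r=0$ with indicial roots $0$ and $2-4n\le -2$, so up to scaling there is a unique solution extending regularly across $r=0$ (and it is automatically even in $r$, hence has vanishing derivative there), and the polynomial constructed above is that solution. I expect no genuine obstacle: the whole argument is parallel to Lemma~\ref{lemma-polynomial-CPn}, the only new ingredients being the drift coefficients $4n+2$ and $3$ and the eigenvalues $4k(k+2n+1)$; the sole nontrivial step is the nonvanishing of the denominators in the recursion, settled by the monotonicity of $t(t+2n+1)$.
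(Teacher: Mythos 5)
Your proposal is correct and follows essentially the same route as the paper: the same operator $\tilde{L}_{\mu}$, the same key identity $\tilde{L}_{\mu}(\cos^{2m}r) = (\mu-4m(m+2n+1))\cos^{2m}r + 4m(m+1)\cos^{2m-2}r$, and the resulting triangular recursion for the coefficients of $\tilde{p}_k$. You in fact supply details the paper leaves implicit (nonvanishing of $\mu_k-4i(i+2n+1)$ via $4(k-i)(k+i+2n+1)$, the normalization and boundary conditions, and uniqueness of the regular solution at $r=0$), which only strengthens the same argument.
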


\begin{proof}
	
	Define the operator 
	
	\begin{equation} \label{operator-L_mu}
		\tilde{L}_{\mu}(\psi) :=  \psi''(r) + \left(\frac{(4n+2) \cos^2 (r)-3}{\cos (r) \sin (r)}\right) \psi'(r)  +\mu \psi.
	\end{equation}
	
	\noindent Note that $\tilde{L}_{\mu} (\psi)=0 $ is the same eigenvalue equation  (\ref{Eq-General-HPn-one-variable-linear}).  Denote by  $\psi_{\mu}$ the corresponding solution of (\ref{Eq-General-HPn-one-variable-linear}).  Let $ \mu = \alpha_k = 4k(k+2n+1)$  then we can compute $\psi_{\alpha_k}$ explicitely.  For instance
	
	\begin{equation}\label{polyn-HPn}
		\psi_{\alpha_1}(r)= \frac{n+1}{n} \cos^2(r) -\frac{1}{n},  
	\end{equation}
	$$\psi_{\alpha_2}(r) =   \frac{1}{n(2n+1)}  \left( (n+2)(2n+3) \cos^4 (r)-3(2n+3) \cos^2 (r)+ 3 \right)$$

	\noindent Note that for every integer $m \geq1$ 
	
	$$\tilde{L}_{\mu}(\cos^{2m} t) = (\mu -4m(m+2n+1))\cos^{2m} (r)+ 4m (m+1) \cos^{2m-2} (r) . $$

	So, for $k >2$ the formulas for $\psi_k := \psi_{\alpha_k}$ can then be found recursively
	
\end{proof}

Next we prove that every polynomial $\tilde{p}_k$, found in the previous lemma, has $k$ single zeros in $\left(0,
\frac{\pi}{2}\right)$.
For this we use induction on $k$ and Sturm's comparison theorem.\\

\begin{Lemma} \label{ceros-vk-HPn}
	The polynomial  $\psi_k (r) = \tilde{p}_k(\cos^2(r))$ has  $k$ zeroes
	simples in $\left(0, \frac{\pi}{2}\right)$.
\end{Lemma}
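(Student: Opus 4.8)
The plan is to run the same induction-plus-Sturm argument used in the proof of Lemma~\ref{ceros-vk-CPn}, the only substantive change being that the first-order coefficient is now
$$\tilde{q}(r) := \frac{(4n+2)\cos^2(r)-3}{\cos(r)\sin(r)}$$
and the zeroth-order coefficients are the eigenvalues $\alpha_k := 4k(k+2n+1)$, which are again strictly increasing in $k$. For $k=1,2$ the base of the induction is read off directly from the explicit polynomials in Lemma~\ref{lemma-polynomial-HPn}: $\psi_1$ vanishes only where $\cos^2(r)=\tfrac{1}{n+1}$, giving one simple zero in $(0,\tfrac{\pi}{2})$, and $\psi_2$, a quadratic in $\cos^2(r)$ whose discriminant is positive and whose two roots both lie in $(0,1)$, has exactly two simple zeros there.

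Assuming the claim for $k$, let $0<r_1<\dots<r_k<\tfrac{\pi}{2}$ be the zeros of $\psi_k$. Both $\psi_k$ and $\psi_{k+1}$ solve equations of the form (\ref{Eq-General-HPn-one-variable-linear}) with the same first-order term but with zeroth-order coefficients $\alpha_k<\alpha_{k+1}$, and $\psi_k(0)=\psi_{k+1}(0)=1$, $\psi_k'(0)=\psi_{k+1}'(0)=0$, so $\tfrac{\psi_{k+1}'(0)}{\psi_{k+1}(0)}\le\tfrac{\psi_k'(0)}{\psi_k(0)}$. By Sturm's comparison theorem, $\psi_{k+1}$ has a zero strictly between any two consecutive zeros of $\psi_k$, and its $i$-th zero precedes the $i$-th zero of $\psi_k$; this already produces $k$ simple zeros of $\psi_{k+1}$, one in each of $(0,r_1),\dots,(r_{k-1},r_k)$. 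To get a $(k+1)$-st zero in $(r_k,\tfrac{\pi}{2})$ I would transcribe, essentially verbatim, the contradiction argument from Lemma~\ref{ceros-vk-CPn}: assuming $\psi_{k+1}$ has no zero in $(r_k,\tfrac{\pi}{2})$ and, after replacing $\psi_k$ and $\psi_{k+1}$ by their negatives if needed, that $\psi_k'(r_k)<0$, one has $\psi_k<0$ and $\psi_{k+1}<0$ on that interval, and the strict Sturm inequality $\tfrac{\psi_{k+1}'}{\psi_{k+1}}<\tfrac{\psi_k'}{\psi_k}$ (valid there since $\psi_k$ does not vanish past $r_k$ and $\psi_{k+1}(r_k)\ne 0$) gives $\psi_k'\psi_{k+1}-\psi_k\psi_{k+1}'>0$. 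From the identity
$$\left(\frac{\psi_k}{\psi_{k+1}}\bigl(\psi_k'\psi_{k+1}-\psi_k\psi_{k+1}'\bigr)\right)' = \left(\frac{\psi_k'\psi_{k+1}-\psi_k\psi_{k+1}'}{\psi_{k+1}}\right)^2 + \frac{\psi_k}{\psi_{k+1}}\,\tilde{q}(r)\bigl(\psi_k\psi_{k+1}'-\psi_k'\psi_{k+1}\bigr) + \psi_k^2\,(\alpha_{k+1}-\alpha_k),$$
integrated over a subinterval ending at $\tfrac{\pi}{2}$ and using $\psi_k'(\tfrac{\pi}{2})=\psi_{k+1}'(\tfrac{\pi}{2})=0$, one derives a contradiction between the sign of the left-hand side at the lower endpoint and the strict positivity of the integral of the right-hand side.

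The one place that genuinely needs attention --- the analogue of the delicate part of the $\mathbb{C}{\bf P}^n$ proof --- is the sign of $\tilde{q}$: it is positive for $r<\tilde{r}_0$ and negative for $r>\tilde{r}_0$, where $\tilde{r}_0\in(0,\tfrac{\pi}{2})$ is the unique point with $\cos(\tilde{r}_0)=\sqrt{3/(4n+2)}$. As in Lemma~\ref{ceros-vk-CPn} one must split into the cases $r_k<\tilde{r}_0$ and $r_k\ge\tilde{r}_0$: in the first case integrate the identity from $\tilde{r}_0$ to $\tfrac{\pi}{2}$ (so that $\tilde{q}\le 0$ on the whole interval and the left-hand expression is strictly negative at $\tilde{r}_0$), and in the second from $r_k$ to $\tfrac{\pi}{2}$ (again $\tilde{q}\le 0$ there, and $\psi_k(r_k)=0$ makes the left-hand expression vanish at $r_k$); in both cases all three terms on the right have the same nonnegative sign and are not identically zero, which yields the contradiction and hence a zero of $\psi_{k+1}$ to the right of $r_k$. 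I do not expect any new difficulty beyond Lemma~\ref{ceros-vk-CPn}; keeping the endpoint signs straight in the two cases is the main thing to verify carefully.
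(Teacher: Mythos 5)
Your proposal is correct and is exactly the route the paper takes: the paper's own proof of Lemma~\ref{ceros-vk-HPn} consists of the single remark that it is ``completely analogous'' to Lemma~\ref{ceros-vk-CPn}, and you have carried out precisely that adaptation (new coefficient $\tilde{q}$, eigenvalues $4k(k+2n+1)$, sign change of $\tilde{q}$ at $\cos\tilde{r}_0=\sqrt{3/(4n+2)}$, same Wronskian identity and case split). If anything, your treatment of the endpoint case $r_k\ge\tilde{r}_0$ is slightly more careful about the sign of the first-order coefficient than the paper's own wording in the $\mathbb{C}{\bf P}^n$ proof.
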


\begin{proof}
	The proof of this lemma is completely analogous to the proof of lemma (\ref{ceros-vk-CPn})
\end{proof}

%-----------------------------------------------------------------------------------------
\section{ Proof of Theorem 1.1 }
%-----------------------------------------------------------------------------------------
In this section we proof  Theorem \ref{t1-CPn}.  The proof  is analogous to that of Theorem (2.2) in \cite{Petean-Barrantes}. We write here for the reader convenience.    We use the lemmas from the   section \ref{Section1},
to prove that every eigenvalue $\lambda_{k}$ is a bifurcation point. Then we use the local and global bifurcation theory to study the set of non
nontrivial solutions  that appears at each bifurcation point.

\begin{proof}[Proof of Theorem \ref{t1-CPn}. ]
	
	We define the operator 
	$$  S:C^{2, \alpha} \left( X_I \right) \times \R_{\geq 0} \rightarrow C^{0, \alpha} \left( X_I \right),   $$
	$$ S(w,\lambda )= -\Delta_{g_{FS}} w +\lambda (w+1 -(w+1)^{q-1} ).$$

	For any $\lambda \in \R_{\geq 0}$, we have  $S(0,\lambda )=0$.
	Let us study nontrivial solutions $(w, \lambda)$ of $S(w,\lambda )=0$ that branch from the curve $(0, \lambda)$. 
	Differentiating  $S$ with respect to $w$ at $(0, \lambda)$ we have.
	
	$$S_u ' (0,\lambda) [v] = -\Delta_{g_{FS}} v - \lambda (q-2) v .$$
	\noindent Then $S_u ' (0,\lambda) [v]=0$ if and only if.
	
	\begin{equation} \label{equation-eigenvalues-CPn}
		-\Delta_{g_{FS}} v - \lambda (q-2) v=0.
	\end{equation}

	\noindent For each positive integer $k$, and $\displaystyle\lambda_k:
	=\frac{4k(n+k)}{q-2}$, we denote  $L_k = S_u ' (0,\lambda_k) $. By  lemma (\ref{lemma-polynomial-CPn}),
	we know that if $ \lambda = \lambda_k$, then the corresponding solution of 
	(\ref{Eq-General-CPn-one-variable-linear}) has the  form   $\varphi_{k} (r) = p_k (\cos^2 (r))$, where $p_k$ is a
	polynomial of degree $k$. Therefore $ \ker (L_k) = \langle \varphi_k\rangle$.  That is, $\ker (L_k)$ has dimension 1. \noindent Note
	that by integration by parts, if $w \in C^{2, \alpha}
	\left( \mathbb{C} {\bf P}^n \right)$ we have.
	
	$$ 0 = \int_{\mathbb{C} {\bf P}^n} L_k(\varphi_{k}) \ w dv_{g_{FS}} = \int_{\mathbb{C} {\bf P}^n}  L_k(w) \varphi_{k} \ dv_{g_{FS}} .$$
	\noindent This implies that the rank $R(L_k)$ of $L_k$, is
	
	$$R(L_k) = \left\{ y \in C^{0, \alpha} \left(\mathbb{C} {\bf P}^n \right) :    \int_{ \mathbb{C} {\bf P}^n } y\varphi_k \ dv_{g_{FS}} =0 \right\} $$
	\noindent On the other hand, differentiating $S'_u$ with respect to $\lambda$ in $(0,\lambda_k)$ and evaluating at $\varphi_k$ we have
	$ S_{u, \lambda} ''(0,\lambda_k)[\varphi_{k}] = (q-2)\varphi_{k}, $  and  since $ \int_{\mathbb{C} {\bf P}^n}    \varphi_k^2 \; dv_{g_{FS}}  \neq 0 $ we obtain that
	$S_{u, \lambda} '' (0,\lambda_k)[\varphi_{k}] \notin R(L_k). $ Therefore, by (\cite[Theorem 2.8]{Ambrosetti-Malchiodi}),
	the bifurcation points of $S(0, \lambda ) =0$ are the points $ \lambda_k $. Moreover,  by the Crandall-Rabinowitz Theorem 
	(\cite{Crandall-Rabinowitz}) near to $(0, \lambda_k)$   the branch of non-trivial solutions can be parametrized by
	
	\begin{equation} \label{parametrization-Crandall-Rab-CPn}
		s  \mapsto(w(s) , \lambda (s) ) \; \mbox{with}\;\; w(s) = s\varphi_k + \beta_k(s)\; 
		, \beta_k (0) =0= \beta'_{k}(0) , \, \, \mbox{and} \;  \lambda (0) = \lambda_k
	\end{equation}
	
	Let $C$ be the closure of the set of nontrivial, positive solutions of $S(w,\lambda )=0$ in $C^{2,\alpha} (X_I)$ and let $C_k$ be the connected component of $C$ containing the point  $(0,\lambda_k )$. By the Rabinowitz global bifurcation theorem  ( see \cite[Theorem 4.8]{Ambrosetti-Malchiodi} or \cite[theorem 3.4.1]{Nirenberg}) we have that either $C_k$ is noncompact or $C_k$ contains a point $(0,\lambda_i )$ with $i \neq k$. We are going to prove that $C_k $ is noncompact.
	
	According to Lemma  \ref{ceros-vk-CPn}  $\varphi_k$ has $k$ zeros simples in $(0,\pi/2 )$, thus  
	the corresponding solution $\varphi$  of (\ref{EDO-nonlinear-CPn}),  has $k$ zeros simples  in $(0,\pi/2 )$ for $s$ small.  Then, if $(w(s) , \lambda (s)$  is a nontrivial solution of (\ref{Eq1-w+1-CPn}) for any $r$ in $(0,\pi/2 )$  in $C_k$,  there is an open around $w$, where all the corresponding solutions of the ordinary differential equation (\ref{EDO-nonlinear-CPn}) has the same number of zeros as $\varphi$ for $s $ small. This implies that  $(0,\lambda_i )$ does not belong to $C_k$ if $i \neq k$.

	\noindent Let $K \colon C^{2, \alpha}\left(X_I \right) \to C^{2,
		\alpha}\left(X_I \right)$  
	the inverse of the operator
	
	$$-\Delta_{g_{FS}} +Id \colon C^{4, \alpha}\left(X_I \right) \to C^{2, \alpha}\left(X_I \right)$$
	
	\noindent The operator $K$ is linear and compact. Let us consider the
	region
	
	$$D:= \{(w, \eta) \in C^{2, \alpha}  \left(X_I \right)\times \R :w >-1, \eta > 1 \} , $$
	
	\noindent We define $T \colon D \to C^{2, \alpha}\left(X_I \right)$
	by
	
	$$T(w, \eta) = \frac{\eta -1}{q-2} K\left( (w+1)^{q-1} -(q-1)w-1) \right ).$$
	
	\noindent Note that $T$ is a compact operator and for every
	$ \eta >1$ we have
	$T(0, \eta) =0, T_w'(0, \eta) =0 $. Now we define  $ F\colon D \to C^{2, \alpha}\left(X_I \right) $ by.
	
	$$F(w, \eta) = w- \eta K(w) -T(w, \eta)$$.
	
	\noindent Note that $F(0, \eta) =0 $ for each $\eta$. And if we apply $-\Delta_{g_{FS}}  +Id$ to the equation $F(w, \eta) =0$, we see that  $F(w, \eta) =0$ if and only if.
	
	$$-\Delta_{g_{FS}}  w -\frac{\eta -1}{q-2} \left((w+1)^{q-1} - (w+1)\right)=0.$$
	
	\noindent Therefore, $F(w, \eta) =0$ if and only if $w$ is a solution of the equation (\ref{Eq1-w+1-CPn}) for $\lambda = \frac{\eta -1}{q-2}$.
	
	Let  $\eta_k = \lambda_k (q-2) +1$. Similarly, as before, let $B$ be the closure of the nontrivial solutions $(w,\eta)$ of
	$F(w,\eta)=0$ in $D$ and let $B_k$ be the connected component of $B$ containing the point $(0,\eta_k )$. Then, by the
	Rabinowitz's global bifurcation theorem \cite[Theorem 4.8]{Ambrosetti-Malchiodi}, it follows that either $B_k$ is noncompact
	or $B_k$ contains another point $(0,\eta_j)$ with $j\neq k$, with $\eta_j$ bifurcation point of $F(w, \eta )$=0.
	But we have already seen that the second condition is not satisfied, so
	therefore $B_k$ is non-compact. But 
	$C_k = \left\{ \left(w, \frac{\eta- 1}{q-2} \right) : (w,\eta ) \in B_k \right\}$
	and therefore $C_k$ is noncompact.
	
	\vspace{.3cm}
	
	Now, since    $ g_{FS}$ has positive Ricci curvature and by \cite[Theorem 6.1]{Bidaut-Veron},  there exist  $\rho>0$
	such that  if $\lambda < \rho$ the equation  (\ref{Eq1-CPn}) only has the trivial solution. For any $\lambda_0$, $0< \rho < \lambda_0$, the set 
	$$A:=\{ (w,\lambda ): S(w, \lambda )=0 , \  \lambda \in [\rho , \lambda_0 ] \}, $$
	We are going to prove that $A$ is compact.
	The proof of this  is well-known, see for instance the proof of \cite[Lemma 2.2]{YanYan}. Note that there exists $\Lambda >0$ such that if $(w,\lambda ) \in A$ then
	$w \leq \Lambda$. This is proved by the blow up technique (see for instance the proof in \cite[Theorem 2.1, page 200]{Schoen-Yau}): let  $(w_i  , \lambda_i ) $ be a sequence in $A$, with $w_i = u_i-1$  and let 	$x_i \in \mathbb{C} {\bf P}^n$ such that
	$w_i (x_i ) \rightarrow \infty$ i.e $u_i (x_i ) \rightarrow \infty$. Taking a subsequence we can assume that $x_i \rightarrow x \in \mathbb{C} {\bf P}^n$ 	and $\lambda_i \rightarrow \lambda \in [\rho , \lambda_0 ]$. Then by taking a normal neighborhood of $x$ and renormalizing $u_i$ one would construct as a limit a positive solution of $\Delta u +\lambda u^{p-1} =0$
	in $\R^{2n}$. But since
	$q<p_{2n}$ is subcritical such solution does not exist by \cite{Gidas-Spruck}.  Then we consider  the
	compact operator  $K \colon C^{2, \alpha}\left(X_I \right) \to C^{2, \alpha}\left(X_I\right)$ defined above which is the inverse of $-\Delta_{g_{FS}} +Id$    and point out that $S(w,\lambda )=0$ if and only if
	$u=K(\lambda u^{p-1} -(\lambda -1) u)$: this implies that $A$ is compact.

	If there exists $\lambda_* > \lambda_k $ such that it does not exist $w \neq 0$ such that  
	$(w, \lambda_* ) \in C_k$, then,  since $C_k$ is connected we have that $C_k \subset C^{2,\alpha} (X_I ) \times [\rho ,\lambda_* ]$. But this  would imply that $C_k$ is compact, which is a contradiction. Then for any $\lambda > \lambda_k$ there exist $w \neq 0$ such that
	$(w, \lambda ) \in C_k$. Since $C_k \cap C_j =\emptyset$ if $j \neq k$, this proves Theorem (\ref{t1-CPn}).

\end{proof}

%-----------------------------------------------------------------------------------------
\section{ Proof of Theorem 1.2 }
%--------------------------------------------------------------------------
In this section we proof  Theorem \ref{t2-HPn}.   In an analogous way to the case of $\mathbb{C}{\bf P}^n$  We use the lemmas from the previous section, to prove that every eigenvalue $\lambda_{k}$ is a bifurcation point. Then we use the local and global bifurcation theory to study the set of non
nontrivial solutions  that appears at each bifurcation point.

\begin{proof}[Proof of Theorem \ref{t1-CPn}. ]
	
	We use bifurcation theory. 	 Define the operator
	
	$$  F:C^{2, \alpha} \left( Y_I \right) \times \R_{\geq 0} \rightarrow C^{0, \alpha} \left( Y_I \right).$$
	$$ F(\tilde{w},\lambda )= -\Delta_{g_{H}} \tilde{w} +\lambda (\tilde{w} + 1 -(\tilde{w} +1)^{q-1} ).$$

	For any $\lambda \in \R_{\geq 0}$, we have  $F(1,\lambda )=0$.
	Let us study nontrivial solutions $(\tilde{w}, \lambda)$ of $F(\tilde{w},\lambda )=0$ that branch from the curve $(0, \lambda)$. Differentiating  $F$ with respect to $\tilde{w}$ at $(0, \lambda)$ we have.
	
	$$F_{\tilde{w}} ' (0, \lambda)[v]  -\Delta_{g_{H}} v - \lambda (q-2) v .$$
	\noindent Then $F_{\tilde{w}}' (1,\lambda) [v]=0$ if and only if.

	\begin{equation}  \label{Eq-HPn-linear}
		-\Delta_{g_{H} }v =(q-2) \lambda v.
	\end{equation}

	\noindent For each positive integer $k$, and $ \lambda = \displaystyle \mu_k:
	=\frac{4k(k+2n +1)}{q-2}$, we denote  $\tilde{L}_k = F_{\tilde{w}} ' (1,\lambda_k) $ and using lemmas (\ref{lemma-polynomial-HPn}) and ( \ref{ceros-vk-HPn}), the rest of the proof is completely analogous to that of the theorem
	(\ref{t1-CPn}).

\end{proof}

\section{Proof of Theorems 1.3 and 1.4}

As in the previous section, we denote  by $C$ the closure of the set of nontrivial solutions of $S(w, \lambda)=0$.
According to the lemma  (\ref{lemma-polynomial-CPn}),  the eigenspace associated to  $\lambda_k$
has dimension 1  and is generated by a polynomial $\varphi_k$ of degree  $k$ in  $\cos^2(r)$.  By the Crandall-Rabinowitz Theorem  (\cite{Crandall-Rabinowitz}), near to $(0, \lambda_k)$,   the connected  component of  $C$ containing the point  $(0, \lambda_k)$ can be parameterized by
$( w(s), \lambda(s) )$,  with  
\begin{equation}\label{psi}
w(s) = s \varphi_k + \beta_k (s), \;\;\;	\lambda(0) = \lambda_k, \;\;\;\beta_k (0)= 0 = \beta_k' (0)
\end{equation}

%----------------------------------------------------------------------------------------------------------------------------------------------
%lema

\begin{Lemma}\label{lema-lambda'(0)-CPn}
Let $( w(s), \lambda (s)  )$  be the  curve of nontrivial solutions of  $S(w, \lambda)=0$,
	which appear at the bifurcation point $(0 ,\lambda_1)$. then $\lambda'(0) \neq 0$. $\lambda'(0) \neq 0$.
\end{Lemma}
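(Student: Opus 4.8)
The plan is to compute the transversal bifurcation derivative $\lambda'(0)$ by the standard Lyapunov--Schmidt solvability argument and then to reduce its nonvanishing to that of one explicit integral. Write $h(w)=w+1-(w+1)^{q-1}$, so that $S(w,\lambda)=-\Delta_{g_{FS}}w+\lambda h(w)$ with $h(0)=0$, $h'(0)=-(q-2)$, $h''(0)=-(q-1)(q-2)$, and recall from the proof of Theorem (\ref{t1-CPn}) that $L_1:=S_{w}(0,\lambda_1)=-\Delta_{g_{FS}}\,\cdot\,-(q-2)\lambda_1\,\cdot$ is self-adjoint on $L^2(\mathbb{C}{\bf P}^n,dv_{g_{FS}})$ with $\ker L_1=\langle\varphi_1\rangle$. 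Substituting the parametrization (\ref{psi}), $w(s)=s\varphi_1+\beta_1(s)$, $\lambda(0)=\lambda_1$, into the identity $S(w(s),\lambda(s))\equiv0$, differentiating twice in $s$ at $s=0$, and using $w'(0)=\varphi_1$ together with the fact that $S$ is affine in $\lambda$ and $S(0,\lambda)\equiv0$ (so $S_\lambda(0,\lambda)\equiv0$ and $S_{\lambda\lambda}\equiv0$), one gets
\begin{equation*}
-\lambda_1(q-1)(q-2)\,\varphi_1^{2}\;-\;2(q-2)\,\lambda'(0)\,\varphi_1\;+\;L_1\!\left[w''(0)\right]\;=\;0 .
\end{equation*}
Pairing this in $L^2(\mathbb{C}{\bf P}^n,dv_{g_{FS}})$ with $\varphi_1$ and using $\int_{\mathbb{C}{\bf P}^n} L_1[w''(0)]\,\varphi_1\,dv_{g_{FS}}=\int_{\mathbb{C}{\bf P}^n} w''(0)\,L_1\varphi_1\,dv_{g_{FS}}=0$ yields
\begin{equation*}
\lambda'(0)\;=\;-\,\frac{\lambda_1(q-1)}{2}\cdot\frac{\displaystyle\int_{\mathbb{C}{\bf P}^n}\varphi_1^{3}\,dv_{g_{FS}}}{\displaystyle\int_{\mathbb{C}{\bf P}^n}\varphi_1^{2}\,dv_{g_{FS}}} ,
\end{equation*}
so the lemma is equivalent to $\int_{\mathbb{C}{\bf P}^n}\varphi_1^{3}\,dv_{g_{FS}}\neq0$ (the factors $(q-2)$ having cancelled, with $q\neq 2$ and hence $\lambda_1<\infty$).

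To evaluate that integral I pass to the orbit space. Reading off the first-order coefficient in (\ref{EDO-nonlinear-CPn}) gives $\Delta_{g_{FS}}f=(2n-1)\cot r-\tan r=\rho'(r)/\rho(r)$ with $\rho(r)=(\sin r)^{2n-1}\cos r$, so for every radial function $\int_{\mathbb{C}{\bf P}^n}(\phi\circ f)\,dv_{g_{FS}}=c\int_0^{\pi/2}\phi(r)\,(\sin r)^{2n-1}\cos r\,dr$ with $c>0$. Since $\varphi_1(r)=p_1(\cos^2 r)$ with $p_1(t)=\frac1n\bigl((n+1)t-1\bigr)$ by Lemma (\ref{lemma-polynomial-CPn}), the substitution $t=\cos^2 r$ turns the claim into
\begin{equation*}
\int_{\mathbb{C}{\bf P}^n}\varphi_1^{3}\,dv_{g_{FS}}\;\propto\;\int_0^{1}\bigl((n+1)t-1\bigr)^{3}(1-t)^{n-1}\,dt .
\end{equation*}
With $u=1-t$ the integrand becomes $\bigl(n-(n+1)u\bigr)^{3}u^{n-1}$; expanding the cube and integrating term by term via $\int_0^1 u^m\,du=1/(m+1)$ gives the value $\dfrac{2(n-1)}{(n+2)(n+3)}$. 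Because $\mathbb{C}{\bf P}^n$ has real dimension $2n\geq3$ we have $n\geq2$, so this quantity is strictly positive; hence $\int_{\mathbb{C}{\bf P}^n}\varphi_1^{3}\,dv_{g_{FS}}>0$ and $\lambda'(0)\neq0$ (indeed $\lambda'(0)<0$).

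The conceptual input beyond the machinery already set up is minimal — only that the first $U(n)$-invariant eigenfunction on $\mathbb{C}{\bf P}^n$ has a cube of nonzero average — so the main obstacle is purely computational: tracking exactly which mixed partials of $S$ survive at $(0,\lambda_1)$ in the second $s$-derivative, correctly identifying the orbit-space weight $\rho$, and evaluating the resulting Beta-type polynomial integral. One should also note that the argument uses $q\neq2$ (harmless, since the bifurcation analysis is vacuous at $q=2$), and that the degenerate case $n=1$, in which the integral would vanish, is excluded precisely because Yamabe-type equations require dimension at least $3$. The same scheme, with the weight $(\sin r)^{4n-1}(\cos r)^{3}$ and $p_1$ as in (\ref{polyn-HPn}), will handle Theorem (\ref{degen-HPn}) on $\mathbb{H}{\bf P}^n$.
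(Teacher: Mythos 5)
Your proposal is correct and follows essentially the same route as the paper's proof: substitute the Crandall--Rabinowitz parametrization, differentiate $S(w(s),\lambda(s))=0$ twice at $s=0$, pair with $\varphi_1$ using the self-adjointness of $L_1=-\Delta_{g_{FS}}-(q-2)\lambda_1$ to obtain $\lambda'(0)=-\frac{(q-1)\lambda_1}{2}\int_{\mathbb{C}{\bf P}^n}\varphi_1^3\,dv_{g_{FS}}\big/\int_{\mathbb{C}{\bf P}^n}\varphi_1^2\,dv_{g_{FS}}$, and then evaluate the cubic integral on the orbit space with the weight $\sin^{2n-1}(r)\cos(r)$. The only difference is numerical: your value $\int_0^1\bigl((n+1)t-1\bigr)^3(1-t)^{n-1}dt=\frac{2(n-1)}{(n+2)(n+3)}$ is the correct one (the paper's factor $-7n-2$ is an arithmetic slip that does not affect the conclusion), and your remark that it vanishes exactly for $n=1$ --- a case excluded because the real dimension $2n$ must be at least $3$ --- is a useful refinement, since nonvanishing for $n\geq 2$ is precisely what the lemma requires.
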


\begin{proof}

	Let $k \geq 1$.Replacing (\ref{psi})  in equation (\ref{Eq1-w+1-CPn}) we have
	$$
	\Delta_{g_{FS}}\bigl( s \varphi_k +\beta_k (s)\bigr) - \lambda(s)
	\bigl( s \varphi_k + \beta_k (s) +1 - \bigl( s \varphi_k + \beta_k (s) +1
	\bigr)^{q-1}\bigr) =0.
	$$
	\noindent \noindent Differentiating with respect to $ s$ we obtain
	
	\begin{eqnarray*}
		\Delta_{g_{FS}} \bigl(\varphi_k+ \beta_k' (s)\bigr) - \lambda'(s) \Bigl[ s \varphi_k + \beta_k (s) +1 - \left( s \varphi_k + \beta_k (s) +1 \right)^{q-1}\Bigr] & &  \\
		- \lambda(s) \Bigl[ \varphi_k + \beta_k' (s)  - (q-1)\bigl( s \varphi_k
		+ \beta_k (s) +1 \bigr)^{q-2}\left(\varphi_k + \beta_k'(s)\right)
		\Bigr] \; = \; 0
	\end{eqnarray*}

	\noindent And differentiating again with respect to $s$ and  evaluating  at $s=0$ we obtain

	\begin{equation} \label{lambdaprima1}
		\Delta_{g_{FS} }\beta_k'' (0)  + (q-2)\lambda_k  \beta_k'' (0)
		+(q-1)(q-2)\lambda_k  \varphi_k^2 +2(q-2)\lambda'(0)\varphi_k  =0
	\end{equation}
	
	\noindent Multilplying by $\varphi_k$ and integrating by parts   on  $ \mathbb{C}{\bf P}^n $ we obtain

	$$ \lambda'(0)= \frac{-(q-1) \lambda_k \displaystyle \int_{\mathbb{C}{\bf P}^n} \varphi_k^3  dv_{g_{FS}} }{2(q-2)\displaystyle \int_{\mathbb{C}{\bf P}^n} \varphi_k^2  dv_{g_{FS}}}  $$
	
	\noindent  In order to prove the lemma, it is enough to analize the integral in the numerator with  $k=1$.
	Note that  $\varphi_1$ is given by 
	$\varphi_1(r) =\frac{n+1}{n}\cos^2 (r) - \frac{1}{n}$ then

	\begin{eqnarray*}
		\int_{\mathbb{C}{\bf P}^n} \varphi_1^3  dv_{g_{FS}} &=&\int_{S^{2n-1}} \int_0^{\frac{\pi}{2}} \left(\frac{n+1}{n}\cos^2 (r) - \frac{1}{n}\right)^3  \sin^{2n -1}(r) \cos (r) dr dv_{g_0^{2n-1}} \\
		&=&Vol\left(S^{2n-1}, {g_0^{2n-1}}\right)\frac{1}{n^3} \int_0^{\frac{\pi}{2}} \left((n+1)\cos^2 (r) - 1\right)^3  \sin^{2n -1}(r) \cos (r) dr \\
		\end{eqnarray*}
		
		\noindent Making the substitution $t = \sin (r)$ we have
		
		\begin{eqnarray*}
			\int_{\mathbb{C}{\bf P}^n} \varphi_1^3  dv_{g_{FS}}  
			&=& \frac{1}{n^3}Vol\left(S^{2n-1}, {g_0^{2n-1}}\right) \frac{1}{2(n+2)(n+3)} \left( -7n-2\right)  \neq 0
		\end{eqnarray*}
		
		%-----------------------------------------------------------------------------------------------------
		In a similar way for the case of  $ \mathbb{H}{\bf P}^n $ we have that

		$$ \lambda'(0)= \frac{-(q-1) \mu_k \displaystyle \int_{\mathbb{H}{\bf P}^n} \psi_k^3  dv_{g_{H}} }{2(q-2)\displaystyle \int_{\mathbb{H}{\bf P}^n} \psi_k^2  dv_{g_{FS}}}  $$
		
		and for $k = 1$, acording to \ref{polyn-HPn} we have that

		\begin{eqnarray*}
			\int_{\mathbb{H}{\bf P}^n} \psi_1^3  dv_{g_{H}} &=&\int_{S^{4n-1}} \int_0^{\frac{\pi}{2}} \left(\frac{n+1}{n}\cos^2 (r) - \frac{1}{n}\right)^3  \sin^{4n -1}(r) \cos^3 (r) dr dv_{g_0^{4n-1}} \\
		\end{eqnarray*}

		Making again  the substitution $t = \sin (r)$ we have $\lambda'(0) \neq 0$
	\end{proof}
	
	%----------------------------------------------------------------------------------------------------------------------------------
	
	Now we prove the Theorem  \ref{degen-CPn}.
	
	\subsection{ Proof of  Theorem \ref{degen-CPn}.}
	\begin{proof}

		We will only prove Theorem 1.3 since the proof of 1.4 is completely analogous.  Since the metric  $g_{FS}$ has positive Ricci curvature,  by \cite[Theorem 6.1]{Bidaut-Veron}, we know that there exists value $\eta >0 $, such that if $\lambda < \eta$ then the equation $S(w, \lambda)=0 $ has only the trivial solution.
		\noindent Consider the set 
		
		$$D:=\{ (w,\lambda ): S(w, \lambda )=0 , \  \lambda \in [\eta , \lambda_1 ] \}, $$

		We know that near  $(0, \lambda_1)$, 	the nontrivial solutions  have the form 	$(w(s), \lambda(s))$, with $w(s ) = s \varphi_1 + \beta_1(s)$ and $\lambda(0) =\lambda_1$, $\beta_1(0) =0= \beta_1'(0)$ and by the previous lemma  $\lambda'(0) \neq 0$.
		
		This implies that there exist values $ \lambda \in [\eta, \lambda_1)$, for which the equation $S(w, \lambda)=0$ has nontrivial solutions. Then the function $ \lambda$ has a minimum $ \lambda_* \in [\eta, \lambda_1)$. Note that  the connected  component $C_1$ of  nontrivial solutions containning   $(0, \lambda_1)$ satisfies $C_1 \cap D \subset D$, which shows that $D$ is nonempty.  Note also that $D$ is compact.
		Let $(\lambda_j)$ be a sequence in $[\eta, \lambda_1)$ that converges to $\lambda_*$ with a solution $w_j$ of $S(w, \lambda) =0$ with $\lambda= \lambda_j$.
		That is, the sequence $(w_j, \lambda_j)$ satisfies $S(w_j, \lambda_j) =0$.
		Since $D$ is compact then the sequence $(w_j, \lambda_j)$ converges to a point $(w_*, \lambda_*) \in D$.
		Therefore, $(w_*, \lambda_*)$ is a solution of the equation $S(w, \lambda) =0$.
		
		Since $ \lambda_* < \lambda_1$ and $\lambda_1$ is the first bifurcation point of $S(w, \lambda) =0$, then $w_*$ is a nontrivial solution.
		We now prove that $(w_*, \lambda_*)$ is a degenerate solution of $S(w, \lambda)=0$.
		Suppose by contradiction that $(w_*, \lambda_*)$ is nondegenerate. The derivative of $S$, with respect to $w$ at $(w_*,\lambda_*)$, is given by.
		
		$$S_w' \colon  C^{2,  \alpha}( X_I) \times \R \to  C^{2, \alpha}( X_I) $$
		$$S_w'(w_*, \lambda_* )[v, \lambda] = \Delta_{g_{FS}}v -\lambda_* (1-(q-1)w_*^{q-2})v$$.
		
		If $(w_*, \lambda_*)$ were non-degenerate, the equation.
		
		$$ \Delta_{g_{FS}}v -\lambda_* (1-(q-1)w_*^{q-2})v=0$$
		
		\noindent would have only the trivial solution $v \equiv 0 $.  This means that the operator $S$ is a local diffeomorphism in the direction of $w$ at the point $w_*$.  Then, there are values $\lambda < \lambda_*$ for which equation $S(w, \lambda)=0$ has nontrivial solutions, which contradicts the fact that $\lambda_*$ is the minimum value of $\lambda$, for which there are nontrivial solutions.
		Therefore, the solution $(w_*, \lambda_*)$ is a degenerate solution of $S(w, \lambda) =0$. Then, taking $u_* := w_* +1$,
		we have that $(u_*, \lambda_*)$ is a degenerate solution of the equation (\ref{Eq1-CPn}).
		This completes the proof of the Theorem \ref{degen-CPn} .
	\end{proof}
	
	%----------------------------------------------------------------------------
	%---------------------------------------------------------------------------
	
	\vspace{.1cm}

\end{document}